\theoremstyle{plain}
\newtheorem{cor}{Corollary}[section]
\newtheorem{lem}{Lemma}[section]
\newtheorem{thm}{Theorem}[section]            
\newtheorem{prop}{Proposition}[section]
\theoremstyle{definition}
\newtheorem{exa}{Example}[section]
\newtheorem{NB}{Remark}[section]
\newtheorem{dfn}{Definition}[section]
\newcommand{\bdm}{\begin{displaymath}}
\newcommand{\edm}{\end{displaymath}}
\newcommand{\be}{\begin{equation}}
\newcommand{\ee}{\end{equation}}
\newcommand{\ba}[1]{\begin{array}{#1}}
\newcommand{\ea}{\end{array}}
\newcommand{\btab}{\begin{tabular}}
\newcommand{\etab}{\end{tabular}}
\newcommand{\op}{\oplus}
\newcommand{\ox}{\otimes}
\newcommand{\Id}{\ensuremath{\mathrm{Id}}}
\newcommand{\tr}{\ensuremath{\mathrm{tr}}}
\newcommand{\del}{\partial}
\newcommand{\C}{\ensuremath{\mathbb{C}}}
\newcommand{\R}{\ensuremath{\mathbb{R}}}
\newcommand{\Z}{\ensuremath{\mathbb{Z}}}
\newcommand{\eps}{\ensuremath{\varepsilon}} 
\newcommand{\vphi}{\ensuremath{\varphi}}    
\newcommand{\End}{\ensuremath{\mathrm{End}}}
\newcommand{\Ric}{\ensuremath{\mathrm{Ric}}}
\newcommand{\Div}{\ensuremath{\mathrm{div}}}
\newcommand{\II}{\ensuremath{\mathrm{II}\,}}
\newcommand{\grad}{\ensuremath{\mathrm{grad}\,}}
\newcommand{\kr}{\ensuremath{\mathcal{R}}}
\newcommand{\cyclic}[1]{\stackrel{{\scriptsize #1}}{\mathfrak{S}}}
\newcommand{\kulk}{\ensuremath{  {~\wedge\!\!\!\!\!\bigcirc~}     }}
\newcommand{\Orth}{\ensuremath{\mathrm{O}}}
\begin{document}
\def\haken{\mathbin{\hbox to 6pt{%
                 \vrule height0.4pt width5pt depth0pt
                 \kern-.4pt
                 \vrule height6pt width0.4pt depth0pt\hss}}}
    \let \hook\intprod
\setcounter{equation}{0}
%
%
\thispagestyle{empty}
%
\date{\today}
\title
{Manifolds with vectorial torsion}
%
%
%
\author{Ilka Agricola}
\author{Margarita Kraus}
\address{\hspace{-5mm} 
Ilka Agricola\newline
Fachbereich Mathematik und Informatik \newline
Philipps-Universit\"at Marburg\newline
Hans-Meerwein-Strasse \newline
D-35032 Marburg, Germany\newline
{\normalfont\ttfamily agricola@mathematik.uni-marburg.de}}
\address{\hspace{-5mm} 
Margarita Kraus\newline
Fachbereich Physik, Mathematik und Informatik\newline
Johannes-Gutenberg-Universit\"at Mainz\newline
Staudingerweg 9\newline
D-55099 Mainz, Germany\newline
{\normalfont\ttfamily mkraus@mathematik.uni-mainz.de}}
%
\subjclass[2000]{Primary 53 C 25; Secondary 81 T 30}
\thanks{Ilka Agricola 
acknowledges financial support by the
DFG within the priority programme 1388 "Representation theory".}
\keywords{metric connection with vectorial torsion; curvature; Dirac operator; 
warped product; parallel spinor; Killing spinor; Weyl manifold; Weyl tensor}  
\begin{abstract}
The present note deals with the properties of metric connections $\nabla$ with
vectorial torsion $V$ on semi-Riemannian manifolds $(M^n,g)$. 
We show that the $\nabla$-curvature is symmetric if and only if $V^{\flat}$ is
closed, and that $V^\perp$ then defines an $(n-1)$-dimensional integrable
distribution on $M^n$. If the vector field $V$ is exact, we show that   the 
$V$-curvature coincides up to global rescaling with the Riemannian
curvature of a conformally equivalent metric. We prove 
that it is possible to construct connections with vectorial
torsion on warped products of arbitrary dimension matching a given
Riemannian or Lorentzian curvature---for example, a $V$-Ricci-flat connection
with vectorial torsion in dimension $4$, explaining some constructions
occurring in general relativity. Finally, 
we investigate the Dirac operator $D$ of
a connection with vectorial torsion. We prove that for exact vector fields, 
the $V$-Dirac spectrum coincides
with the spectrum of the Riemannian Dirac operator.  We investigate 
in detail the existence of $V$-parallel spinor fields; several examples are 
constructed. It is known that  the existence 
of a $V$-parallel spinor field implies $dV^\flat=0$ for $n=3$ or $n\geq 5$;
for $n=4$, this is only true  on compact manifolds. 
We prove an identity relating the  $V$-Ricci curvature to 
the curvature in the spinor bundle. This result allows us to prove that
if there exists a nontrivial $V$-parallel spinor, then $\Ric^V=0$ for
$n\neq 4$ and $\Ric^V(X)=X\haken dV^\flat$ for $n=4$. We  conclude 
that the manifold  is  conformally equivalent either  
to a manifold with Riemannian parallel spinor or to a manifold  whose universal 
cover is the product of $\R $ and an Einstein space of positive scalar 
curvature. We also prove that if $dV^\flat=0$, there are no non-trivial
$\nabla$-Killing spinor fields.
\end{abstract}
\maketitle
\pagestyle{headings}
%
%
\section{Introduction}\noindent
%

The present note deals with metric connections on
semi-Riemannian manifold $(M^n, g)$ of the form
\bdm
\nabla_X  Y = \nabla^g_X Y+ g(X,Y)V - g(V,Y)X, 
\edm
where $V$ denotes a fixed vector field on $M$ and $\nabla^g$ is the usual
Levi-Civita connection. This is one of the three basic types of metric 
connections introduced by \'Elie Cartan
(see Section \ref{sec.metric-conn}), and is called a metric connection
with \emph{vectorial torsion}. 
These connections are particularly interesting on surfaces, in as much
that \emph{every} metric connection on a surface is of this type.
By a seminal theorem of Ambrose and Singer \cite{Ambrose&S58}, a 
simply connected Riemannian manifold is homogeneous if and only if its
canonical connection $\nabla^c$ has $\nabla^c$-parallel torsion and
curvature. Cartan's result implies therefore that there are 
three possible basic 
types of non-symmetric  homogeneous spaces (see also \cite{Tricerri&V1}).
If the canonical connection has vectorial torsion, the condition
$\nabla^c$-parallel torsion is equivalent to $\nabla^c V=0$.
Such homogeneous spaces were intensively studied in the past, while only
few results on general semi-Riemannian manifolds equipped with a 
metric connection $\nabla$ with vectorial torsion are available. 
In \cite[Thm 5.2]{Tricerri&V1}, F.~Tricerri and L.~Vanhecke  showed
that if $M$ is connected, complete,  simply-connected and
$\nabla V= \nabla \kr = 0$, then $(M,g)$ has to be
isometric to hyperbolic space (see also \cite{CastrillonGS13}
for an alternative modern proof). Hence, it is too strong a condition
to require that $V$ is $\nabla$-parallel. 

Denote by $V^\flat$ the dual
$1$-form of $V$; we shall loosely call the vector field $V$ \emph{closed}
if $d V^\flat=0$. In Section \ref{sec.curvature}, we prove that $\nabla$ has 
symmetric curvature if and only if $V$ is closed,  and that this is equivalent
to the condition that the $(n-1)$-dimensional distribution $D:=V^\perp$ is 
involutive. We thus believe that $d V^\flat=0$ is a richer, geometrically 
interesting replacement of the condition $\nabla V=0$.
If the vector field is exact, we show that   the 
$V$-curvature coincides up to global rescaling with the Riemannian
curvature of a conformally equivalent metric. We then investigate more in 
detail the case of warped products, both with Riemannian and
Lorentzian signature. In particular, we prove 
that it is possible to construct connections with vectorial
torsion on warped products of arbitrary dimension matching a given
Riemannian or Lorentzian curvature---for example, a $V$-Ricci-flat connection
with vectorial torsion in dimension $4$. This explains the occurrence of some 
examples of $V$-Ricci-flat manifolds known in physics \cite{Hehl}.

In the last part of the paper, we investigate the Dirac operator $D$ of
a connection with vectorial torsion. As already observed by
Friedrich in \cite{Friedrich79}, the Dirac operator is not formally
self-adjoint anymore, thus making its analysis much harder. Nevertheless,
we prove that for exact vector fields, the $V$-Dirac spectrum coincides
with the spectrum of the Riemannian Dirac operator. Based on results from
\cite{Pfaeffle&S11}, we derive a formula of Schr\"odinger-Lichnerowicz type
formula for $D^* D$. We investigate in detail the existence of
$V$-parallel spinor fields (i.\,e.~parallel for the metric connection with vectorial torsion $V$); several examples are constructed on warped 
products. By a result of \cite{Mor}, it is known that  the existence 
of a $V$-parallel spinor field implies $dV^\flat=0$ for $n=3$ or $n\geq 5$;
for $n=4$, this is only true  on compact manifolds. To get a more detailed
picture, we prove an identity relating the  $V$-Ricci curvature to 
the curvature in the spinor bundle. This result allows us to prove that
if there exists a nontrivial $V$-parallel spinor, then $\Ric^V=0$ for
$n\neq 4$ and $\Ric^V(X)=X\haken dV^\flat$ for $n=4$---in particular, the
$V$-Ricci curvature is totally skew-symmetric in the latter case, a rather
unfamiliar situation. We  conclude that the manifold  is  
conformally equivalent either  
to a manifold with parallel spinor or to a manifold  whose universal 
cover is the product of $\R $ and an Einstein space of positive scalar 
curvature. We also prove that if $dV^\flat=0$, there are no non-trivial
$\nabla$-Killing spinor fields.
%
\section{Metric connections with torsion}\label{sec.metric-conn}
%
Consider a semi-Riemannian manifold $(M^n, g)$ of index $k$. The 
difference between its Levi-Civita connection $\nabla^g$ and any linear 
connection $\nabla$ is  a $(2,1)$-tensor field $A$,
\bdm 
\nabla_X Y\ =\ \nabla^g_X Y + A(X,Y),\quad X,Y \in TM^n.
\edm
Following Cartan, we study the algebraic 
types of the torsion tensor for a metric connection.
Denote by the same symbol the $(3,0)$-tensor
derived from a $(2,1)$-tensor  by contraction with the metric.
We identify $TM^n$ with $(TM^n)^*$ using $g$ from now on. 
Let $\mathcal{T}$ be the $n^2(n-1)/2$-dimensional space of all 
possible torsion tensors,
\bdm
\mathcal{T}\ =\ \{T\in\ox^3 TM^n \ | \ T(X,Y,Z)= - \, T(Y,X,Z) \}
\ \cong \ \Lambda^2 TM^n\ox TM^n \, .
\edm
A connection $\nabla$ is metric if and only if $A$ belongs to the space
\bdm
\mathcal{A}^g\ :=\ TM^n\ox(\Lambda^2 TM^n) \ = \ \{A \in\ox^3 TM^n \ | 
\ A(X,V,W) +  A(X,W,V) \ =\ 0\} \, .
\edm 
In particular, $\dim \mathcal{A}^g = \dim \mathcal{T}$, reflecting
the fact that metric connections can be uniquely characterized by their
torsion. The following proposition has been proven in  {\cite[p.51]{Cartan25a}, 
\cite{Tricerri&V1}} in the Riemannian case, but one easily checks that it
holds also for  semi-Riemannian  manifolds.
\begin{prop}
\label{classessum}
The spaces $\mathcal{T}$ and $\mathcal{A}^g$ are isomorphic
as $\Orth(n,k)$ representations, an  equivariant bijection being 
\bdm
T(X,Y,Z)  =  A(X,Y,Z)-A(Y,X,Z), \quad
2 \, A(X,Y,Z)  =  T(X,Y,Z)-T(Y,Z,X)+T(Z,X,Y).
\edm
%
%
For $n=2$,  
$\mathcal{T}\cong\mathcal{A}^g\cong\R^2$ is $\Orth(2,k)$-irreducible, while
for $n\geq 3$,  it splits under the action of $\Orth(n,k)$
into the sum of three irreducible representations,
\bdm
\mathcal{T}\cong TM^n \op \Lambda^3(M^n) \op \mathcal{T}'.
\edm
%
%
%

\end{prop}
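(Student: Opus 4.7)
The plan is to address in turn the bijection, the low-dimensional case $n=2$, and the decomposition for $n\geq 3$.

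\textbf{The bijection.} Given $A\in\mathcal{A}^g$, the tensor $T(X,Y,Z):=A(X,Y,Z)-A(Y,X,Z)$ is manifestly antisymmetric in $X,Y$, hence lies in $\mathcal{T}$. Conversely, the right-hand side of $2A(X,Y,Z)=T(X,Y,Z)-T(Y,Z,X)+T(Z,X,Y)$ is antisymmetric in $Y,Z$, which follows from a short manipulation using only that $T$ is antisymmetric in its first two slots; a direct substitution then confirms that the two maps are mutually inverse. Equivariance under $\Orth(n,k)$ is automatic because the formulas are purely tensorial and use only the metric.

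\textbf{Case $n=2$.} Here $\Lambda^2 TM\cong\R$ as an $\Orth(2,k)$-module (it carries the determinant character), so $\mathcal{T}\cong TM\ox\det$. The group $\Orth(2,k)$ acts irreducibly on $TM\cong\R^2$: no proper subspace is preserved by both the identity component and an orientation-reversing element (this handles both $\Orth(2)$ and $\Orth(1,1)$, the latter requiring the swap to rule out the two null lines). Twisting an irreducible representation by the sign character preserves irreducibility, and moreover $\Lambda^3 TM=0$, so no cyclic component can arise.

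\textbf{Case $n\geq 3$.} I would construct two canonical $\Orth(n,k)$-equivariant maps out of $\mathcal{T}$: the total alternation $\mrm{Alt}:\mathcal{T}\to\Lambda^3 TM$, $\mrm{Alt}(T)(X,Y,Z):=T(X,Y,Z)+T(Y,Z,X)+T(Z,X,Y)$, and the trace $c:\mathcal{T}\to TM$ with $c(T)(Z):=\sum_i\eps_i T(e_i,Z,e_i)$ in a pseudo-orthonormal frame $\{e_i\}$. Each admits an explicit equivariant section: $\Lambda^3 TM\inj\Lambda^2 TM\ox TM$ via the antisymmetrisation embedding for $\mrm{Alt}$, and $V\mapsto \tfrac{1}{n-1}\bigl[g(X,Z)V(Y)-g(Y,Z)V(X)\bigr]$ for $c$ (a brief computation shows that applying $c$ to this section recovers $V$). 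Setting $\mathcal{T}':=\ker\mrm{Alt}\cap\ker c$ then produces the orthogonal splitting $\mathcal{T}\cong TM\op\Lambda^3 TM\op\mathcal{T}'$.

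The main obstacle is proving that $\mathcal{T}'$ is $\Orth(n,k)$-irreducible. I would complexify and invoke the classical decomposition of $V\ox\Lambda^2 V$ under $\Orth(n,\C)$: it splits exactly into the vector module $V$, the totally antisymmetric module $\Lambda^3 V$, and an irreducible module of hook symmetry type $(2,1)$---an instance of Pieri's rule adapted to orthogonal groups. A dimension count confirms completeness, after which irreducibility descends to the real form $\Orth(n,k)$ since the three modules remain pairwise non-isomorphic as real representations and carry no extra real structure forcing a further splitting.
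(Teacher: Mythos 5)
Your argument is correct, and it is worth noting that the paper itself does not prove this proposition at all: it simply cites Cartan and Tricerri--Vanhecke for the Riemannian case and asserts that "one easily checks" the semi-Riemannian version. So your write-up supplies a self-contained argument where the paper offers only a reference. The individual steps check out: the two formulas are indeed mutually inverse up to the factor $2$ (the verification uses only the antisymmetry of $A$ in its last two slots and of $T$ in its first two); for $n=2$ the identification $\mathcal{T}\cong TM\ox\det$ and the irreducibility of the standard representation of $\Orth(2)$ and of $\Orth(1,1)$ (the orientation-reversing element being needed precisely to rule out the two null lines) are right; and your section of the trace map is correctly normalised, since $c$ applied to $g(X,Z)V(Y)-g(Y,Z)V(X)$ returns $(n-1)V$. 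Two small points you should make explicit. First, to obtain a genuine three-fold direct sum from the two equivariant surjections $\mrm{Alt}$ and $c$ you must also record the cross-vanishing: $c$ annihilates $\Lambda^3 TM\subset\mathcal{T}$ (antisymmetry in the first and third slots kills the trace) and $\mrm{Alt}$ annihilates the image of your section of $c$ (the cyclic sum of $g(X,Z)V(Y)-g(Y,Z)V(X)$ vanishes identically); only then do the images of the two sections together with $\ker\mrm{Alt}\cap\ker c$ span $\mathcal{T}$ by the dimension count. Second, in the descent from $\Orth(n,\C)$ to $\Orth(n,k)$ the relevant mechanism is not the absence of a "real structure forcing a further splitting" but simply that a real invariant subspace would complexify to a complex invariant subspace, and $\Orth(n,k)$ is Zariski dense in $\Orth(n,\C)$ (meeting both components), so irreducibility of the complexification immediately forces irreducibility of the real module. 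With these two clarifications your proof is complete and, unlike the paper, actually on the page.
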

The connection $\nabla$ is said to have  \emph{vectorial torsion} if
its torsion tensor lies in  the first space of the
decomposition in Proposition \ref{classessum}, i.\,e.~if it is essentially
defined by some vector field $V$ on $M$. The tensors $A$ and $T$
can then  be directly expressed through  $V$  as
\be\label{eq.torsion}
A_V(X)Y \ =\ g(X,Y)V-g(V,Y)X,\quad
T_V(X,Y,Z)\ =\ g\big(g( V, X)Y- g(V, Y)X,Z\big).
\ee
The connection $\nabla$ is said to have  \emph{skew-symmetric 
torsion} or just \emph{skew torsion} if its torsion tensor lies in  
the second component of the
decomposition in Proposition \ref{classessum}, i.\,e.~it is
given by a $3$-form. The third torsion component has no geometric interpretation.
Recall that homogeneous spaces whose canonical connection has
skew torsion are usually known as \emph{naturally reductive} homogeneous spaces;
their holonomy properties and their classification are currently
topics of great interest.
While metric connections with $\nabla$-parallel skew torsion have a rich
geometry (for example, the characteristic connections of Sasaki manifolds, of
nearly K\"ahler manifolds, and of nearly parallel $G_2$ manifolds have this 
property), metric connections with $\nabla$-parallel vectorial torsion
are  rare--the underlying manifold has to be covered by hyperbolic space.
Alas, this means that the general holonomy principle will not be applicable for the
investigation of metric connections with vectorial torsion.
%
\section{Curvature}\label{sec.curvature}
%
Let $(M,g)$ be an $n$-dimensional semi-Riemannian manifold and $\nabla$
a metric connection on $M$, possibly with torsion $T^\nabla$. We denote the
Levi-Civita connection on $M$ by $\nabla^g$. For a  vector field $V$
on $M$, we define a $1$-form  $A_V \in \Omega^1 (M, \End (TM))$  by 
\be\label{1}
A_V(X) Y \ :=\  g(X,Y) V - g(V,Y)X,
\ee
hence   $\nabla^g + A_V$ becomes a connection with vectorial torsion on $M$. 
Then the following formulas relating  the curvatures of $\nabla$ 
and  $\nabla + A_V$ hold:
\begin{lem}
The curvature quantities of the connections $\nabla$ and $\nabla+A_V$
satisfy the following relations:
\begin{enumerate}
\item Curvature transformation:
\begin{eqnarray*}
\kr^{\nabla + A_V} (X,Y) Z 
&=& \kr^\nabla (X,Y) Z + g(Y,Z) \nabla_XV - g(X,Z) \nabla_Y  V\\  
&& +\, \big[g(\nabla_YV,Z) - g(Y,Z) \|V\|^2 + g(V,Z) g (V,Y)\big] X \\
& & -\, \big[g(\nabla_XV,Z) - g(X,Z) \|V\|^2 + g(V,Z) g (V,X)\big] Y\\ 
&& +\, \big[g(Y,Z) g(V,X) - g(X,Z)g(V,Y)\big] V \\
&& + g(T^\nabla (X,Y),Z) V- g(Z,V) T^\nabla (X,Y).
\end{eqnarray*}
\item Ricci curvature:
\begin{eqnarray*}\label{Ricci} 
\Ric^{\nabla +A_V} (X,Y)  &=& \Ric^\nabla (X,Y) + [\Div^\nabla V + (2-n)
\|V\|^2]\, g(X,Y) \\
&& + (n-2) \, [ g (V,X) g(V,Y)  +  g(\nabla_XV,Y)] \\
&& + g(V,Y) \tr (X\haken T^\nabla)+ g(T^\nabla(V,X),Y).
\end{eqnarray*}
\item Scalar curvature:
\[
s^{\nabla + A_V} \ =\  s^\nabla +  2(n-1) 
\Div^\nabla V + (n-1) (2-n)\|V \|^2 + 2 \tr (V \haken T^\nabla).
\]
\end{enumerate}
\end{lem}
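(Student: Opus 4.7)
The plan is to derive all three identities from the single general formula for the curvature of a ``shifted'' connection $\tilde\nabla:=\nabla+A$ with $A\in\Omega^1(M,\End(TM))$ arbitrary. Expanding $\kr^{\tilde\nabla}(X,Y)Z = \tilde\nabla_X\tilde\nabla_YZ - \tilde\nabla_Y\tilde\nabla_XZ - \tilde\nabla_{[X,Y]}Z$, cancelling the terms containing $\nabla Z$, and using $\nabla_XY-\nabla_YX=[X,Y]+T^\nabla(X,Y)$ to account for the extra $A(T^\nabla(X,Y))Z$ that surfaces, one obtains
\[
\kr^{\tilde\nabla}(X,Y)Z = \kr^\nabla(X,Y)Z + (\nabla_XA)(Y)Z-(\nabla_YA)(X)Z + A(T^\nabla(X,Y))Z + [A(X),A(Y)]Z.
\]
This identity drives the whole lemma; no further cleverness is required.

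For part $(1)$, specialise to $A=A_V$. Since $\nabla$ is metric, $(\nabla g)=0$, and a brief application of the product rule gives $(\nabla_XA_V)(Y,Z) = g(Y,Z)\nabla_XV - g(\nabla_XV,Z)Y$. The commutator $[A_V(X),A_V(Y)]Z$ expands directly from $A_V(X)Y=g(X,Y)V-g(V,Y)X$; the cross-terms of the form $g(V,Z)g(X,Y)V$ are symmetric in $X,Y$ and cancel, leaving exactly the four terms with coefficients $\|V\|^2$ and $g(V,\cdot)g(V,\cdot)$ appearing on the second, third and fourth lines of $(1)$. Finally $A_V(T^\nabla(X,Y))Z = g(T^\nabla(X,Y),Z)V - g(V,Z)T^\nabla(X,Y)$ is immediate. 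Assembling and regrouping yields $(1)$.

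Parts $(2)$ and $(3)$ follow mechanically by tracing. Choose a local pseudo-orthonormal frame $\{e_i\}$ with $\varepsilon_i:=g(e_i,e_i)$ and use $\Ric^{\nabla+A_V}(Y,Z)=\sum_i\varepsilon_ig(\kr^{\nabla+A_V}(e_i,Y)Z,e_i)$. For each of the five lines of $(1)$, apply the three basic rules: an endomorphism $X\mapsto\lambda X$ with $\lambda$ a scalar traces to $n\lambda$; $X\mapsto g(X,W)U$ traces to $g(U,W)$; and $X\mapsto\omega(X)U$ with $U$ independent of $X$ traces to $\omega(U)$. Summing gives the claimed $\Div^\nabla V$, $(2-n)\|V\|^2$, $(n-2)g(\nabla_XV,Y)$ and $(n-2)g(V,X)g(V,Y)$ contributions, together with the torsion pieces $g(T^\nabla(V,X),Y)$ and a term which, after using $T^\nabla(X,Y)=-T^\nabla(Y,X)$, becomes $g(V,Y)\tr(X\haken T^\nabla)$. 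Part $(3)$ follows by one further trace, the only nontrivial step being $\sum_j\varepsilon_jg(V,e_j)\tr(e_j\haken T^\nabla)=\tr(V\haken T^\nabla)$ (by linearity in the first slot), which combines with the direct trace of $g(T^\nabla(V,X),Y)$ to double the torsion contribution to $2\tr(V\haken T^\nabla)$.

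The only real difficulty is sign-bookkeeping in the torsion contributions: $\tr(X\haken T^\nabla)$ and $\sum_i\varepsilon_ig(T^\nabla(e_i,X),e_i)$ differ by a sign arising from the skew-symmetry of $T^\nabla$ in its first two slots, so conventions must be matched carefully when reconciling the last lines of $(1)$ and $(2)$. Apart from this, everything is a routine expansion that uses only metricity of $\nabla$ and the explicit form of $A_V$.
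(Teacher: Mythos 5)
Your proposal is correct and carries out precisely the ``routine computation'' that the paper omits: the general shifted-connection curvature identity, the computation of $(\nabla_X A_V)(Y)Z$, the commutator $[A_V(X),A_V(Y)]$, the torsion term $A_V(T^\nabla(X,Y))Z$, and the two traces all check out against the stated formulas (including the sign in $\tr(X\haken T^\nabla)$ and the coefficient $(n-1)(2-n)$ in the scalar curvature). The only thing worth flagging is that your Ricci convention $\sum_i\varepsilon_i g(\kr(e_i,Y)Z,e_i)$ differs superficially from the paper's $\sum_i\varepsilon_i g(\kr(X,e_i)e_i,Y)$, but the two agree because the curvature of a metric connection is skew in both the first and the last pair of slots.
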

\begin{proof}
The formulas follow by a routine computation 
(for $\nabla = \nabla^g$, the terms involving $T^\nabla$
vanish, see also the Appendix of \cite{Agricola06}).
We normalize the Ricci curvature as follows
\[
\Ric^{ \nabla + A_V} (X,Y)\ = \ \sum_{i=1}^n 
\varepsilon_i g(\kr^{\nabla + A_V} (X,e_i)e_i,Y)
\]
for an orthonormal basis $e_1,\ldots, e_n $  ($g(e_i,e_j) =
\varepsilon_i\delta_{ij},\ \varepsilon_i = \pm 1$). The trace and 
divergence are defined by
\bdm
\tr (V\haken T^\nabla)\, :=\, \sum_{i=1}^n \varepsilon_{i} g  
(T^\nabla (V,e_i), e_i)),\quad
\Div^\nabla V \, := \,\sum_{i=1}^n \varepsilon_i g(\nabla_{e_i} V, e_i).
\qedhere
\edm 
\end{proof}
In case that $\nabla$ has skew symmetric torsion, 
$\Div^\nabla V = \Div^{\nabla^g} =: \Div V$ and $\tr (X\haken T^\nabla) = 0$.

Let us now consider the case where $\nabla =\nabla^g$ the Levi-Civita 
connection. We denote the curvature of $\nabla^g $ by $\kr^g$ and the curvature 
of $\nabla := \nabla^g+A_V$ by $\kr^V$, and analogously for the Ricci and 
scalar curvatures. We call a manifold $V$-flat if $\kr^V=0$ and use the words 
$V$-Einstein and so on in the same way.

\begin{cor}\label{cor.curv-general}
Let $(M,g)$ be an $n$-dimensional semi-Riemannian manifold, $\nabla$ a connection
with vectorial torsion $V$. Then
\begin{enumerate}
\item Ricci curvature:
\bdm\label{Ricci2} 
\Ric^{V} (X,Y)  = \Ric^g (X,Y) + [\Div\, V + (2-n)
\|V\|^2]\, g(X,Y) + (n-2) \, [ g (V,X) g(V,Y)  +  g(\nabla^g_XV,Y)].
\edm

\item Scalar curvature:
\[
s^{V} \ =\  s^g +  2(n-1) 
\Div\, V + (n-1) (2-n) \|V\|^2 .
\]
\end{enumerate}
\end{cor}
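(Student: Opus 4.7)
The plan is to derive the corollary as a direct specialization of the preceding Lemma to the case where the background connection is the Levi-Civita connection, i.e.\ taking $\nabla = \nabla^g$ in the Lemma so that the connection with vectorial torsion in the corollary corresponds to $\nabla^g + A_V$. Since $\nabla^g$ is torsion-free, $T^{\nabla^g} = 0$, so every term in the Lemma that is contracted against $T^\nabla$ vanishes identically: in particular,
\[
g(V,Y)\,\tr(X\haken T^{\nabla^g}) \ =\ 0,\qquad g(T^{\nabla^g}(V,X),Y)\ =\ 0,\qquad \tr(V\haken T^{\nabla^g})\ =\ 0.
\]
Moreover, the $\nabla$-divergence reduces to the standard Riemannian divergence, $\Div^{\nabla^g} V = \sum_i \varepsilon_i\,g(\nabla^g_{e_i} V,e_i) = \Div V$, which is exactly the convention used in the statement of the corollary.

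Substituting these simplifications into the Ricci and scalar-curvature formulas of the Lemma yields the two displayed identities of the corollary line by line; no additional argument is needed. The only bookkeeping is to replace $\Ric^\nabla$ by $\Ric^g$ and $s^\nabla$ by $s^g$ in accordance with the notational convention $\kr^V := \kr^{\nabla^g + A_V}$ introduced just before the corollary.

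There is no genuine obstacle: the result is simply the torsion-free specialization of the general formulas. The one point worth noting for the sequel is that the term $(n-2)\,g(\nabla^g_X V,Y)$ in the $V$-Ricci formula is not symmetric in $X,Y$, which foreshadows the fact that $\Ric^V$ fails to be symmetric in general—consistent with the earlier observation that symmetry of the $\nabla$-curvature is controlled by the closedness of $V^\flat$.
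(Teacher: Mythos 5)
Your proposal is correct and is exactly the argument the paper intends: the corollary is the specialization of the preceding Lemma to $\nabla=\nabla^g$, where $T^{\nabla^g}=0$ kills all torsion-contracted terms and $\Div^{\nabla^g}V=\Div V$. The paper gives no separate proof beyond this observation, so there is nothing to add.
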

For surfaces, the formulas simplify 
\begin{cor}\label{cor.curv-surfaces}
Let $(M,g)$ be an $2$-dimensional semi-Riemannian manifold,
$\nabla$ a connection with vectorial torsion $V$. Then
\begin{enumerate}
\item Ricci curvature: $\Ric^{V} (X,Y)  = \Ric^g (X,Y) +\Div\, V g(X,Y)$
\item Scalar curvature: $s^{V} \ =\  s^g +  2 \,\Div V$
\end{enumerate}
In particular, a surface $M$ is $V$-Einstein if and only if $M$ is Einstein.
\end{cor}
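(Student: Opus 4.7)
The plan is to obtain Corollary~\ref{cor.curv-surfaces} by specializing Corollary~\ref{cor.curv-general} to dimension $n=2$ and then exploiting the classical fact that every semi-Riemannian $2$-manifold is automatically Einstein.

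First I would simply substitute $n=2$ into the two formulas of Corollary~\ref{cor.curv-general}. In the Ricci formula, the coefficient $(2-n)$ of $\|V\|^2 g(X,Y)$ vanishes, and the entire second line vanishes because its prefactor is $(n-2)$; what remains is exactly
\[
\Ric^V(X,Y)\ =\ \Ric^g(X,Y)+\Div V\cdot g(X,Y).
\]
For the scalar curvature, the $(n-1)(2-n)\|V\|^2$ term vanishes and the coefficient $2(n-1)$ reduces to $2$, yielding $s^V=s^g+2\Div V$. So (1) and (2) are immediate, with no calculation needed beyond tracking which summands die in dimension $2$.

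For the Einstein statement I would invoke the standard fact that on any $2$-dimensional semi-Riemannian manifold one has $\Ric^g=\tfrac{s^g}{2}g$, so $(M,g)$ is always Einstein. Plugging this into the Ricci formula from (1) gives
\[
\Ric^V\ =\ \tfrac{s^g}{2}\,g+\Div V\cdot g\ =\ \tfrac{s^g+2\Div V}{2}\,g\ =\ \tfrac{s^V}{2}\,g,
\]
using (2) in the last step. Hence $(M,g)$ is also $V$-Einstein, and the equivalence in the corollary is actually a tautology: both conditions hold unconditionally on any surface. The only subtlety worth flagging is that $\Ric^V$ is a priori not guaranteed to be symmetric for connections with torsion, but the formula in (1) exhibits it as a sum of two manifestly symmetric tensors, so proportionality to $g$ is a well-defined notion here and the argument goes through.

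There is essentially no obstacle, since the whole statement is a direct substitution combined with the triviality of the Einstein condition in dimension $2$; the main thing to be careful about is checking symmetry of $\Ric^V$ before declaring the $V$-Einstein condition meaningful.
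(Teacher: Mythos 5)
Your proof is correct and follows exactly the route the paper intends: the corollary is obtained by setting $n=2$ in Corollary \ref{cor.curv-general}, where the $(2-n)$ and $(n-2)$ terms vanish, and the Einstein equivalence is indeed the tautology you describe since every surface satisfies $\Ric^g=\tfrac{s^g}{2}g$ and hence, by (1) and (2), also $\Ric^V=\tfrac{s^V}{2}g$. Nothing to add.
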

\begin{NB}If $M$ is a closed surface,  the total $V$-scalar curvature 
$S^V(M):=\int\limits_M s^V d\mu $ is
equal to $2\pi\chi(M)$,  and therefore independent of $V$. Especially if $M$ is $V$-flat then $M$ is a torus.
\end{NB}

In dimension $n >2$  we consider the  the Schouten  tensor
\bdm
C^g = \frac{1}{n-2}( \frac{1}{2(n-1)}  s^g g - \Ric^g ) 
\kulk g
\edm
 and the Weyl tensor $W^g$ in Dimension $n>3$. Here $\kulk $ denotes the Kulkarni-Nomizu Product, which is usually defined for symmetric tensors, but we will also use it for two arbitrary   tensors:
\bdm
(\alpha \kulk \beta) (X,Y,Z,W) := \alpha (X,Z) \beta (Y, W) 
+ \alpha (Y,W) \beta (X,Z) - \alpha (X, W) \beta (Y,Z) 
- \alpha (Y,Z) \beta (X,W).
 \edm
Note that this is a tensor which is still  antisymmetric in the first 
as well as in the  last two variables.
We define the Schouten Tensor for the $V$-curvature in the same way as 
the usual Schouten Tensor: 
\bdm
C^V = \frac{1}{n-2}( \frac{1}{2(n-1)}  s^V g - \Ric^ V ) 
\kulk g
\edm
Then the equation for the $V$-curvature simplifies as follows:
\begin{prop}\label{Weyl-tensor}
Let $(M,g)$ be a semi-Riemannian manifold of dimension $n>2$,
$\nabla$ a connection with vectorial torsion $V$. Then for the $V$-curvature 
tensor holds
\begin{enumerate}
\item
$\kr^V = W^g +C^V$  if  $n>3$
\item
$\kr^V = C^V$ if $n=3$
\end{enumerate}
\end{prop}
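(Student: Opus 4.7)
The plan is to verify the single identity $\kr^V - \kr^g = C^V - C^g$ as $(0,4)$-tensors, after which both assertions follow immediately: for $n \geq 4$ one adds the classical Weyl decomposition $\kr^g = W^g + C^g$, while for $n = 3$ the Weyl tensor vanishes identically and the claim reduces to $\kr^V - \kr^g = C^V - C^g$ together with $\kr^g = C^g$.

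The first step is to compute $\kr^V - \kr^g$ directly from Lemma~3.1 applied with $\nabla = \nabla^g$, so that every $T^\nabla$-term drops out. Taking the $g$-inner product of the resulting vector with a fourth vector $W$ and sorting the remaining summands by their tensorial type, every term should be recognizable as a contribution to $\alpha \kulk g$ for one of three fixed $(0,2)$-tensors $\alpha$: the metric $g$ itself (with scalar coefficient $\|V\|^2$), the rank-one tensor $V^\flat \otimes V^\flat$, and the non-symmetric tensor $(X,Y)\mapsto g(\nabla^g_X V, Y)$. Collecting these summands should yield
\[
\kr^V - \kr^g \ = \ P \kulk g, \qquad
P(X,Y) \ := \ \tfrac{1}{2}\|V\|^2 g(X,Y) - V^\flat(X) V^\flat(Y) - g(\nabla^g_X V, Y).
\]
The factor $\tfrac{1}{2}$ is forced because $g \kulk g$ carries an overall factor of two. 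Note that $P \kulk g$ is well-defined and still antisymmetric in its first and last pair of arguments even though $P$ itself is not symmetric, as pointed out after the definition of $\kulk$.

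The second step uses Corollary~3.1 to compute $C^V - C^g$ directly. Substituting the expressions for $\Ric^V - \Ric^g$ and $s^V - s^g$, the $\Div V$ contributions cancel between the scalar and Ricci parts, while the $\|V\|^2$ coefficients combine cleanly with the prefactor $\tfrac{1}{n-2}$ to reproduce $\tfrac{1}{2}\|V\|^2 g$. A routine algebraic check should produce
\[
\tfrac{1}{n-2}\Big[\tfrac{1}{2(n-1)}(s^V - s^g)\,g - (\Ric^V - \Ric^g)\Big] \ = \ P,
\]
so that $C^V - C^g = P \kulk g$ as well, matching the outcome of the first step.

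The main obstacle will be purely bookkeeping: one must carefully track the four summands of each Kulkarni--Nomizu product, make sure the non-symmetric piece $g(\nabla^g V,\cdot)$ is treated correctly, and verify the cancellations of $\Div V$ together with the combination of $\|V\|^2$-coefficients. No new geometric input beyond Lemma~3.1, Corollary~3.1, and the standard Weyl decomposition of $\kr^g$ is required.
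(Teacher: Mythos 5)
Your proposal is correct: I checked that $\kr^V-\kr^g=P\kulk g$ with your $P$ follows from Lemma~3.1 (with $T^\nabla=0$), that the $\Div V$ terms cancel and the $\|V\|^2$ coefficients combine as you claim so that $C^V-C^g=P\kulk g$ as well, and the conclusion then follows from the classical decomposition $\kr^g=W^g+C^g$. This is essentially the same argument as in the paper—same lemma, same Weyl decomposition, same verification that $\kr^V-\kr^g=C^V-C^g$—only organized invariantly through the explicit tensor $P$ rather than checked componentwise in an orthonormal frame as the paper does.
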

\begin{proof}
Let $(e_1,\ldots,e_n)$ an orthonormal basis. We denote the components of the 
curvature tensore by 
$g( \kr (e_i, e_j) e_k, e_l) = \kr _{ijkl} $ and analogously for the 
other tensors. We always suppose that $i\neq j \neq k \neq l$. 
Note that both sides of the equations are pairwise antisymmetric  in the 
first two and in the  last two arguments. Therefore it suffices to 
calculate the following cases:
\begin{eqnarray*}
\kr^{V}_{ijji}  
&=& \kr^g_{ijji} + g( \nabla_{e_i} V, e_i ) + g( \nabla_{e_j}  V,e_j) - \|V\|^2 +  V_i^2 + V_j^2  \\  
&=&  \kr^g_{ijji}   + \frac{1}{n-2}( \Ric^V_{ii}-  \Ric^g_{ii} +   \Ric^V_{jj} -  \Ric^g_{jj}  
+ \frac{s^g -s^V}{n-1})    \\
&=&\kr^{g}_{ijji}  - C^g_{ijji}+C^V_{ijji}
\ =\  W^g_ {ijji} + C^V_{ijji}\\ 
\kr^{V}_{ijjk}  
&=& \kr^g_{ijjk} + g( \nabla_{e_i} V, e_k ) +  V_i V_k  
\ =\  \kr^g_{ijjk}   + \frac{1}{n-2}( \Ric^V_{ik}-  \Ric^g_{ik} )    \\
& =& W^g_ {ijjk} + C^V_{ijjk}\\ 
\kr^{V}_{ijkl}  
&=& \kr^g_{ijkl} =  W^g_ {ijkl} + C^V_{ijkl}
\end{eqnarray*}
Here we  inserted the formulas for the Ricci curvature of Corollary 
\ref{cor.curv-general} in the formula for the curvature tensor. 
Obviously, for  $n= 3$ the last equation as well as the Weyl tensor vanish.
\end{proof} 
%
\begin{cor}
Let $(M,g)$ a Riemannian manifold of Dimension $n>3$,
$V$ a vector field. If $M$ is $V$-flat, then $M$ is also conformally flat.
Conversely, if $M$ is conformally flat and $V$-Ricci flat, then $M$ is $V$-flat.
\end{cor}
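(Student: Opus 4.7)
My approach is to invoke Proposition \ref{Weyl-tensor} directly. Since $n > 3$, the proposition gives $\kr^V = W^g + C^V$, where $C^V = \frac{1}{n-2} L \kulk g$ with $L := \frac{s^V}{2(n-1)} g - \Ric^V$. The key observation I will exploit is that $\Ric^V$ and $s^V$ are themselves traces of $\kr^V$, so the vanishing of $L$ is controlled by curvature contractions. I expect both directions of the corollary to reduce to this simple bookkeeping.

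For the first direction, I would assume $\kr^V = 0$ and take the usual $g$-trace to conclude $\Ric^V = 0$, and a further contraction then gives $s^V = 0$. This forces $L = 0$, hence $C^V = 0$, and the decomposition then yields $W^g = 0$, i.e., $M$ is conformally flat. For the converse, I would start from $W^g = 0$ (conformal flatness) together with $\Ric^V = 0$; tracing $\Ric^V$ again gives $s^V = 0$, so again $L = 0$ and $C^V = 0$, whence $\kr^V = W^g + C^V = 0$.

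I do not anticipate any real obstacle. The proof is a direct bookkeeping exercise given Proposition \ref{Weyl-tensor}; the only point worth flagging is that the trace relations $\Ric^V(X,Y) = \sum_i \eps_i g(\kr^V(X,e_i) e_i, Y)$ and $s^V = \sum_i \eps_i \Ric^V(e_i,e_i)$ used in the paper carry over verbatim to the $V$-case, even though $\Ric^V$ need not be symmetric when $V$ fails to be closed. Since no symmetry of $\Ric^V$ is required to run the argument, the conclusion follows without further hypotheses on $V$.
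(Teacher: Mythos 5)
Your proof is correct and is exactly the argument the paper intends: the corollary is stated as an immediate consequence of Proposition \ref{Weyl-tensor}, and your bookkeeping ($\kr^V=0\Rightarrow\Ric^V=0\Rightarrow s^V=0\Rightarrow C^V=0\Rightarrow W^g=0$, and conversely $W^g=0$, $\Ric^V=0\Rightarrow s^V=0\Rightarrow C^V=0\Rightarrow\kr^V=0$) is the intended deduction. Your remark that no symmetry of $\Ric^V$ is needed is also accurate, since the paper's $\kulk$ is defined for arbitrary, not necessarily symmetric, tensors.
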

\begin{NB}
The triple $(M,g,V)$ defines a Weyl structure, i.\,e.~a conformal 
manifold with a torsion free connection which is compatible with the conformal 
class.  The corresponding Weyl curvature  satisfies 
$\kr^{Weyl} = \kr ^V + dV^\flat $, therefore it coincides with the 
$V$-curvature if $dV^\flat = 0$ \cite{Gau}. This was the argument used in
\cite{AgFr10} to prove the first part of the  corollary if $dV^\flat = 0$.
\end{NB}  
Our aim is to show now that indeed, assuming the vector field $V$ to be closed
is a very natural condition.
The $V$-Ricci curvature for surfaces is symmetric, but in higher dimensions 
this is not necessarily the case. For reference, let us recall
that the first Bianchi identity for a metric connection with vectorial
torsion $V$ is (see for example \cite{Agricola06})
\be\label{eq.Bianchi-I}
\cyclic{X,Y,Z} \kr^V(X,Y)Z\ =\ \cyclic{X,Y,Z}dV^\flat (X,Y)Z.
\ee
\begin{prop}
Let $(M,g)$ be a semi-Riemannian manifold of dimension $n>2$,
$\nabla$ a connection with vectorial torsion $V$. Then the
following conditions are equivalent:
\begin{enumerate}
\item
The $V$-curvature $\kr^V(X,Y,Z,W)$ is symmetric with respect to the pairwise
exchange of $(X,Y)$ and $(Z,W)$,
\item the $V$-Ricci tensor $\Ric^V$ is symmetric, 
\item the $1$-form $V^\flat$ is  closed.
\end{enumerate}
\end{prop}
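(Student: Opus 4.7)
I would prove the three-way equivalence by establishing the cycle $(1)\Rightarrow(2)\Rightarrow(3)\Rightarrow(1)$, exploiting the fact that the most computationally heavy input, the explicit formula for $\Ric^V$, is already available from Corollary \ref{cor.curv-general}, and the first Bianchi identity \eqref{eq.Bianchi-I} has been recorded.

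\textbf{Step 1: $(1)\Rightarrow(2)$.} This is the standard trace argument. Since $\nabla=\nabla^g+A_V$ is metric, $\kr^V$ is antisymmetric in the last two slots (as well as in the first two, automatically). Tracing the pairwise symmetry $\kr^V(X,e_i,e_i,Y)=\kr^V(e_i,Y,X,e_i)$ and using both antisymmetries to bring it to $\kr^V(Y,e_i,e_i,X)$, one immediately reads $\Ric^V(X,Y)=\Ric^V(Y,X)$.

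\textbf{Step 2: $(2)\Rightarrow(3)$.} Here I would quote Corollary \ref{cor.curv-general} directly. In the expression
\[
\Ric^V(X,Y)=\Ric^g(X,Y)+[\Div V+(2-n)\|V\|^2]g(X,Y)+(n-2)\bigl[g(V,X)g(V,Y)+g(\nabla^g_XV,Y)\bigr],
\]
every summand on the right is manifestly symmetric in $(X,Y)$ except the last one. Antisymmetrising therefore gives
\[
\Ric^V(X,Y)-\Ric^V(Y,X)=(n-2)\bigl[g(\nabla^g_XV,Y)-g(\nabla^g_YV,X)\bigr]=(n-2)\,dV^\flat(X,Y),
\]
the last equality using torsion-freeness of $\nabla^g$. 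Since $n>2$, the hypothesis forces $dV^\flat=0$.

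\textbf{Step 3: $(3)\Rightarrow(1)$.} If $dV^\flat=0$, the first Bianchi identity \eqref{eq.Bianchi-I} reduces to the classical algebraic Bianchi identity $\cyclic{X,Y,Z}\kr^V(X,Y)Z=0$. Combined with the two antisymmetries $\kr^V(X,Y,Z,W)=-\kr^V(Y,X,Z,W)=-\kr^V(X,Y,W,Z)$ valid for any metric connection, the standard purely algebraic manipulation (sum the Bianchi identity applied to $(X,Y,Z)$ and to $(Y,Z,W)$ with a sign and compare) yields pairwise symmetry $\kr^V(X,Y,Z,W)=\kr^V(Z,W,X,Y)$.

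\textbf{Expected obstacle.} The argument is essentially frictionless because Corollary \ref{cor.curv-general} and the Bianchi identity \eqref{eq.Bianchi-I} do the heavy lifting; the only subtle point is the restriction $n>2$, which is needed precisely to invert the factor $(n-2)$ in Step 2 and is consistent with the Remark following Corollary \ref{cor.curv-surfaces} (in dimension $2$ the $V$-Ricci tensor is always symmetric regardless of whether $V^\flat$ is closed). A minor bookkeeping point to check is that the conventions used in \eqref{eq.Bianchi-I} match, i.e.\ that with the paper's sign for torsion $T_V$, the right-hand side of the algebraic Bianchi identity really is the cyclic sum of $dV^\flat(X,Y)\,Z$ and not, say, a multiple thereof.
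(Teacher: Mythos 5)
Your argument is correct and rests on the same two ingredients as the paper's proof: the cyclic-sum manipulation of the first Bianchi identity (\ref{eq.Bianchi-I}) to pass between pairwise symmetry of $\kr^V$ and $dV^\flat=0$, and the antisymmetrisation of the Ricci formula of Corollary \ref{cor.curv-general}, which produces the factor $(n-2)\,dV^\flat(X,Y)$ and makes the hypothesis $n>2$ essential. The only difference is organisational: you close the cycle $(1)\Rightarrow(2)\Rightarrow(3)\Rightarrow(1)$, whereas the paper proves $(1)\Leftrightarrow(3)$ in both directions at once (which additionally requires observing that $\cyclic{X,Y,Z}dV^\flat(X,Y)Z=0$ for linearly independent $X,Y,Z$ forces $dV^\flat=0$) and $(2)\Leftrightarrow(3)$ separately.
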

\begin{proof}
We proceed as in the Riemannian case: we take the inner product
of the first Bianchi identity with some fourth vector field $W$ and
sum cyclically over $X,Y,Z$, and $W$. This yields on the left
hand side
\bdm
\cyclic{X,Y,Z,W}\big(\cyclic{X,Y,Z} \kr^V(X,Y,Z,W)\big)\ =\ 
2 \kr^V(Z,X,Y,W)-2\kr^V(Y,W,Z,X),
\edm
hence the $V$-curvature is  `pairwise' symmetric if this quantity vanishes
identically. For the right hand side, we compute
\bdm
\cyclic{X,Y,Z,W}\big(\cyclic{X,Y,Z} dV^\flat(X,Y)g(Z,W)\big)\ =\ 
2g(\cyclic{X,Y,Z} dV^\flat(X,Y)Z,W).
\edm
Since the inner product is non-degenerate and $X,Y,Z$ can be chosen to be
linearly independent, we conclude that
the right hand side vanishes if and only if $dV^\flat=0$.

Of course, $\Ric^V$ will be symmetric if $\kr^V$ is symmetric; but let us prove
that this is indeed the only situation where this happens. By Corollary 
\ref{cor.curv-general},
$\Ric^V$ is symmetric if, for all vector fields $X,Y$ on $M$,
$g(\nabla_X^g V, Y) = g(\nabla_Y^g V,X)$ holds.
For the $1$-form $V^\flat$ corresponding to $V$, the Cartan differential is 
calculated as $d V^\flat (X,Y)= g(\nabla^g_XV,Y) - g(\nabla^g_YV,X)$. 
This  gives the result.
\end{proof}
\begin{NB}
A metric connection with skew torsion $T\in\Lambda^3(M)$ has symmetric
curvature if $T$ is parallel, $\nabla T=0$ \cite[p.32]{Agricola06}. 
This is again a hint that $dV^\flat=0$ is a more natural geometric condition
than $\nabla V=0$.
\end{NB}
\begin{dfn}
Let $\nabla$ be a metric connection with vectorial torsion $V$.
We agree to call $\nabla$ a connection with \emph{closed} vectorial torsion
if, in addition,  $dV^\flat=0$ holds. More generally, we will call a vector 
field \emph{closed} if its dual $1$-form is closed.
\end{dfn}
A second geometric interpretation of closed vectorial torsion is given 
in the following proposition.
\begin{prop}\label{prop.geom-sym-Ricci}
Let $(M,g)$ be a semi-Riemannian manifold. Suppose that
 $V$ is a vector field on $M$ with $g(V,V) = \| V\|^2 \neq 0$ everywhere, 
and $\nabla$ the metric connection with vectorial torsion $V$.
If $V$ is closed, the distribution 
$D = V^\bot$ is involutive and for every vector field $X$ in $D$ holds:
\be\label{3}
X(\|V\|^2) \ =\  g(V,[X,V]).
\ee
Conversely, if $D$ is an involutive $(n-1)$-dimensional  distribution on 
$(M,g)$ and $V$ a vector field of nowhere vanishing length orthogonal 
to $D$ such  that $V\haken dV^\flat=0$ holds, then the $V$-curvature
is symmetric. In this case,  the $V$-Ricci tensor satisfies 
for $\xi, \eta\in D$:
\begin{eqnarray*}
\Ric^V (\xi,\eta) - \Ric^g (\xi,\eta)\ \ \, &=& \left[
H \|V\| + \frac{1}{|V|} (V(\|V\|)) + (2-n) \|V\|^2\right] 
g(\xi,\eta)
 + (n-2) \|V\| \II  (\xi,\eta) , \\
\Ric^V (\xi,V) - \Ric^g(\xi,V) \ &=& \frac{n-2}{2}  \xi (\|V \|^2), \\
\Ric^V (V,V) - \Ric^g (V,V) &=& \|V\|^2 ( H \|V\|+ (n-1) \frac{1}{\|V\|} 
V (\|V\|)),
\end{eqnarray*}
where $\II$ is the second  fundamental  form, $H=\tr\, \II$ the mean 
curvature on the leaves of $D$, and $\Ric^g$ the  Ricci curvature 
of the Levi-Civita connection on $M$. 
\end{prop}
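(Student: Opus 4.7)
The plan is to split the proof into three parts, corresponding to the three statements of the proposition.

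For the first direction, I would start from the definition of the Cartan differential:
\[
 dV^\flat(X,Y)\ =\ X\,g(V,Y)-Y\,g(V,X)-g(V,[X,Y]).
\]
Plugging in $X,Y\in D=V^\perp$ (so $g(V,X)=g(V,Y)=0$), the hypothesis $dV^\flat=0$ immediately gives $g(V,[X,Y])=0$, hence $[X,Y]\in D$, i.e.~$D$ is involutive. Setting instead $Y=V$ and taking $X\in D$, the same identity reduces to $X(\|V\|^2)=g(V,[X,V])$, which is precisely \eqref{3}.

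For the converse, I would observe that if $D$ is involutive and $X,Y\in D$, the first identity above shows that $dV^\flat$ vanishes on $D\times D$. The hypothesis $V\haken dV^\flat=0$ handles all mixed pairs $(V,Z)$, and $dV^\flat(V,V)=0$ trivially. Since at each point $TM=\R V\oplus D$, bilinearity forces $dV^\flat=0$ identically, so the previous proposition yields the symmetry of $\kr^V$.

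The Ricci identities then follow by direct substitution into Corollary~\ref{cor.curv-general} once we have expressed all relevant quantities in an adapted orthonormal frame $(e_1,\dots,e_{n-1},N)$ with $e_i\in D$ and $N=V/\|V\|$ (so $V=\|V\|\,N$). Writing $\varepsilon_i=g(e_i,e_i)$ and $\varepsilon=g(N,N)$, the Leibniz rule applied to $V=\|V\|N$ gives
\[
 g(\nabla^g_\xi V,\eta)\ =\ \|V\|\,g(\nabla^g_\xi N,\eta)\ =\ \|V\|\,\II(\xi,\eta)\quad(\xi,\eta\in D),
\]
while $g(\nabla^g_N V,N)=\varepsilon\,N(\|V\|)$ since $g(\nabla^g_N N,N)=\tfrac12 N g(N,N)=0$. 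Summing gives
\[
 \Div V\ =\ \|V\|\,H\ +\ \tfrac{1}{\|V\|}V(\|V\|).
\]
Substituting this and the expression for $g(\nabla^g_\xi V,\eta)$ into the Ricci formula of Corollary~\ref{cor.curv-general} yields the first identity. The second identity follows by setting $Y=V$, noting $g(\xi,V)=0$ and $g(\nabla^g_\xi V,V)=\tfrac12\xi(\|V\|^2)$. The third follows by setting $X=Y=V$ and using $g(\nabla^g_V V,V)=\tfrac12 V(\|V\|^2)=\|V\|\,V(\|V\|)$, after which the terms $(2-n)\|V\|^4$ and $(n-2)\|V\|^4$ cancel against each other.

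The main obstacle is bookkeeping rather than conceptual: in the semi-Riemannian setting the causal character of $V$ enters through $\varepsilon=\mathrm{sgn}\,g(V,V)$, and one has to make sure the sign conventions for $\II$, $H$ and $\|V\|$ are chosen consistently so that the cancellations above produce exactly the formulas stated.
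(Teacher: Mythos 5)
Your proof is correct and follows the same route as the paper's: involutivity and the identity \eqref{3} come from evaluating $dV^\flat$ on pairs from $D$ and on pairs $(X,V)$, and the converse reassembles $dV^\flat=0$ from its vanishing on $D\times D$ together with $V\haken dV^\flat=0$, then invokes the preceding proposition for the symmetry of $\kr^V$. You in fact supply more detail than the paper, whose proof stops at the symmetry statement and leaves the three Ricci formulas (which you correctly derive from Corollary \ref{cor.curv-general} via $\Div V = \|V\|H + \tfrac{1}{\|V\|}V(\|V\|)$ and $g(\nabla^g_\xi V,\eta)=\|V\|\,\II(\xi,\eta)$) as an unstated routine computation.
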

\begin{proof}
 For $X,Y \in V^\bot$ from $d V^\flat =0$ follows
\[
0 = g (V,\nabla^g_X Y -\nabla_Y^g X) = g(V,[X,Y]),
\]
which means that $[X,Y] \in V^\bot$ and so $V^\bot$ is involutive and 
(\ref{3}) is equivalent to $V \haken d V^\flat = 0$.
Conversely, if $V^\bot$ is involutive then for $X,Y\in V^\bot $
\[
dV^\flat (X,Y)= g (V,\nabla^g_X Y -\nabla_Y^g X) = g(V,[X,Y]) = 0.
\]
Together with $V \haken dV^\flat =0$ follows the symmetry of the $V$-Ricci 
curvature.
\end{proof}
\begin{NB}
By now, manifolds admitting a homogeneous structure
of strict type $TM\oplus \mathcal{T}'$ (see Proposition \ref{classessum})
are called \emph{cyclic homogeneous manifolds}.
Pastore and Verroca proved in \cite{Pastore&V91} that  
cyclic homogeneous manifolds whose
vectorial torsion part satisfies $dV^\flat=0$ are foliated by isoparametric
$(n-1)$-dimensional submanifolds---the assumption that $\kr^V$ is 
$\nabla$-parallel is crucial for this stronger result. 
It is known that cyclic homogeneous manifolds are never compact
\cite{Tricerri&V2}, which fits into the general picture (see the examples) 
below that all interesting examples are non compact. For dimension
$n\leq 4$, they are classified in \cite{Gadea&G&O14}. Examples on Lie groups
can be found in \cite{Gadea&G&O15}.
\end{NB}

\begin{NB}
For any metric connection $\nabla$ with torsion $T$, the differential
of a $1$-form $\omega$ satisfies
\bdm
d\omega (X,Y)\ =\ \nabla_{X}\omega(Y)-\nabla_Y\omega(X)+\omega(T(X,Y)).
\edm
Formula (\ref{eq.torsion}) implies that the last term vanishes for 
a connection with vectorial torsion and $\omega=V^\flat$ , hence we obtain
the remarkable identity
\be\label{eq.dV}
dV^\flat(X,Y)\ =\ \nabla_X V^\flat(Y)- \nabla_Y V^\flat(X)\ =\ 
g(\nabla_X V, Y) - g(\nabla_Y V,X),
\ee
in complete analogy to the classical formula expressing $dV^\flat$
through $\nabla^g V^\flat$. In particular, $V$ is closed if and only if
the tensor $S(X,Y):= g(\nabla_X V, Y)$ is symmetric in $X$ and $Y$.
\end{NB}
We now consider the  $V$-curvature for  some special vector fields.

\begin{exa}
If $V$ is a  Killing vector field, the  condition that $V$ be closed implies 
\[
g(\nabla_X^g V,Y) = 0
\]
for all vector fields $X,Y$ on $M$ and therefore  $V$ is a Riemannian
parallel vector field. In this case   the second fundamental form $\II$ 
of the orthogonal  distribution is zero. 
Then $\Ric^{V} = 0$ iff $\Ric^g (X,Y)  =(n-2) \|V\|^2  g(X,Y) $ for $X$ 
and $Y$ in the orthogonal distribution and $\Ric^g (V,X)  = 0$ for any 
vector  $X$. Therefore if $ M$ is simply connected, it is the product 
of a Einstein space of positiv scalar curvature  and $\R$.
 Moreover if $M$ is $V$-flat, it is the product of a sphere and $\R$.
\end{exa}
\begin{exa}
If the vector field $V$ is $\nabla$-parallel, it is closed by equation
(\ref{eq.dV}) and $\Div (V)  = (n-1) g(V,V) $ is constant.
If one integrates this identity over a closed Riemannian manifold, 
$V=0$ follows, proving that this condition is rather restrictive.
The curvature formulas are particularly simple, 
\begin{enumerate}
\item Ricci curvature: $\Ric^{V} (X,Y)  = \Ric^g (X,Y) +(n-1) g( V,V) g(X,Y)$
\item Scalar curvature: $s^{V} \ =\  s^g +  n(n-1) g(V,V)$
\end{enumerate}
Therefore, $M$ is $V$-Ricci flat exactly if $M$ is Einstein with 
Ricci curvature 
$-(n-1)g(V,V)g$ which is, up to a constant, the  Ricci curvature of 
hyperbolic space. Indeed, if $M $ is the hyperbolic space noted as 
warped product $\R \times _{e^t} \R^n$, then 
$V = \del_t$ is $V$-parallel and  $\Ric^{V} =0 $.
Actually hyperbolic space  satisfies even $\kr ^V = 0$ and according to  Tricerri and Vanhecke it is locally the only $V$-flat space with $\nabla V = 0$

\end{exa}
\begin{exa}
If $V$ is a closed conformal vector field,  $\nabla_X^g V = \lambda X$ for
all  vector fields $X$ on $M$ and a function $\lambda$ on $M$. Thus the 
leaves of the distribution $V^\bot$ outside the set  of  zeroes of $V$  are 
umbilic with mean curvature $\frac{n-1}{\|V\|}\lambda $ and
\[
(\Ric^{V} - \Ric^g) (X,Y) = (2(n-1)\lambda +\|V\|^2 (2-n)) g(X,Y) 
+ (n-2)g(V,X)g(V,Y)
\]
and
\[s^V-s^g = 2n(n-1)\lambda - (n-1)(n-2)\|V\|^2 .
\]
\end{exa}
Now we consider the case that the orthogonal distribution is still umbilic, 
but the vector field is not necessarily conformal. Umbilic distributions are
precisely $\mathrm{SO}(n-1)$-structures of vectorial type in the sense of
\cite{Agri&F06}.
\begin{prop}
Let $M$ be an $n$-dimensional  Riemannian manifold with an involutive 
$(n-1)$-dimensional  $D$, whose leaves are totally umbilic with 
mean curvature $H = (n-1)\lambda $, where $\lambda \in C^\infty(M)$. 
Let $N$  a normal unit field on the  leaves of $D$.

For $n  >2$,  a vector field $V$ is orthogonal to the distribution $D$ and closed
if and only if  $V = hN$ with $h \in C^\infty (M)$ and $h$ satisfies the equation
\be
d h (\xi) = h  g (N,[\xi ,N])
\label{5}\ee
for all vector fields $\xi$ in the distribution $D$.
 Then for all vector fields $\xi, \eta$ in $D$
\begin{eqnarray*}
\Ric^{V} (\xi,\eta) - \Ric^g (\xi,\eta) &=& ((2n-3) h\cdot\lambda 
+ (2-n)h^2 +N(h)) g(\xi,\eta)\\
\Ric^{V} (N,\xi) - \Ric^g (N,\xi) &=& (n-2) \xi (h) \\
\Ric^{V} (N,N) - \Ric^{g} (N,N) &=& (n-1) (h\lambda + N(h))
\end{eqnarray*}
\end{prop}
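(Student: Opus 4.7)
The plan is to first write $V = hN$ and characterize closedness via equation (\ref{5}), then obtain the three Ricci identities by direct substitution into Corollary \ref{cor.curv-general}.

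Since $D$ has codimension one and $V \perp D$, one has $V = hN$ with $h \in C^\infty(M)$, which automatically ensures $V^\flat|_D \equiv 0$ and $V^\flat(N) = h$. The Cartan formula gives $dV^\flat(X,Y) = X(V^\flat(Y)) - Y(V^\flat(X)) - V^\flat([X,Y])$. Evaluated on $\xi, \eta \in D$, involutivity $[\xi,\eta]\in D$ kills all three terms, so $dV^\flat(\xi,\eta)=0$ is automatic; closedness therefore only imposes a condition along the normal direction. Evaluating on $(\xi, N)$ with $\xi \in D$ gives $dV^\flat(\xi, N) = \xi(h) - h\, g(N, [\xi, N])$, and this vanishes precisely when (\ref{5}) holds, giving the characterization in both directions.

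For the curvature assertions, one applies Corollary \ref{cor.curv-general} to $V = hN$. In an orthonormal frame $(e_1, \ldots, e_{n-1}, N)$ with $e_i \in D$, the umbilicity $\II = \lambda g|_D$ and $H = (n-1)\lambda$ yield $\Div V = (n-1)\lambda h + N(h)$ and $\|V\|^2 = h^2$, while $g(\nabla^g_\xi V, \eta) = h\lambda g(\xi,\eta)$ for $\xi,\eta \in D$. Substituting into the Ricci formula of Corollary \ref{cor.curv-general}, the $(\xi, \eta)$ case collapses after combining $[(n-1)+(n-2)]\lambda h = (2n-3)\lambda h$, and the $(N, N)$ case follows directly once the $(2-n)h^2$ and $(n-2)h^2$ terms cancel and the two $N(h)$ contributions add to $(n-1)N(h)$. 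The subtle case is $(N,\xi)$: the corollary produces the geometric quantity $(n-2)\,h\,g(\nabla^g_N N, \xi)$, which one rewrites as $(n-2)\xi(h)$ by invoking the closedness relation (\ref{5}) together with the identity $g(N, [\xi, N]) = g(\nabla^g_N N, \xi)$, itself a one-line calculation using that $N$ is a unit vector field and $\xi \perp N$.

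I expect the main bookkeeping obstacle to be fixing the sign convention for $\II$ consistently with the one implicit in Proposition \ref{prop.geom-sym-Ricci} (namely $\II(X,Y) = g(\nabla^g_X N, Y)$); with this choice the identity $H = (n-1)\lambda$ produces the right sign on every $\lambda$-term and the remaining work is elementary algebra, with no further geometric input needed beyond the umbilicity and involutivity assumptions.
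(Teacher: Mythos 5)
Your proposal is correct and follows essentially the same route as the paper: reduce closedness to condition (\ref{5}) using involutivity (the paper does this by noting that (\ref{5}) is equivalent to $V\haken dV^\flat=0$ and computing $\nabla^g_\xi V = dh(\xi)N+h\lambda\xi$, which is the same computation as your Cartan-formula argument), and then substitute $\Div V=(n-1)\lambda h+N(h)$, $\|V\|^2=h^2$ and $g(\nabla^g_\xi V,\eta)=h\lambda g(\xi,\eta)$ into Corollary \ref{cor.curv-general}. Your handling of the $(N,\xi)$ case via $g(N,[\xi,N])=g(\nabla^g_N N,\xi)$ is exactly the paper's derivation of $\nabla^g_N V=\grad h$, and your sign convention $\II(X,Y)=g(\nabla^g_X N,Y)$ matches the one used implicitly in the paper.
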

\begin{proof} 
Since $D$ is umbilic, $\nabla^g_\xi N = \lambda \xi $ for $\xi \in D.$
For $V = hN$, we obtain
\[
\nabla_\xi^g V =d h (\xi ) N  +h \lambda \xi.
\]
Therefore, $V \haken dV^\flat = 0$ if and only if (\ref{5})  holds. 
Then $\nabla_N^g V = \grad h$ because
\[
g(\nabla_N^gV,\xi ) = - g(V,\nabla_N^g\xi ) = - g(V,\nabla_\xi^g N) 
- g(V,[N,\xi]) = dh (\xi)
\]
according to (\ref{5}) and $g(\nabla_N^gV,N) = dh(N)$.
\end{proof}
From Hodge theory it is well known that the space of closed vector fields on a closed Riemannian manifold
is the orthogonal sum of the space of harmonic vector fields and the space 
of exact vector fields. Let us first consider the case that $V$ is  a 
gradient vector field. In \cite{Agricola&T04}, it has been shown that in 
this case the $V$-geodesics coincide with the geodesics of a conformally 
equivalent metric up to reparametrisation. Here we show that   the 
$V$-curvature coincides up to rescaling with the Riemannian
curvature of a conformally equivalent manifold. From the point of view 
of Weyl geometry this follows from the fact that
the Weyl structures  $(W, e^{2f}g, V-df)$ are all equivalent and have 
therefore the same Weyl  curvature. Alternatively, one compares
the definition of $A_V$  with the formulas for the connection of 
the conformally changed metric \cite{Besse} and concludes  that 
$\nabla^{\tilde g}= \nabla^V +B$ with $B(X) =- g(V,X) \Id  $; the following
formulas are then obtained by a straightforward calculation.
\begin{prop} \label{prop.conf. Aenderung}
Let $M$ be a manifold with metric $g$, $V= - \grad f$ and $\tilde g = e^{2f}g$. 
Then
\bdm
\kr^V= e^{-2f}\kr^{\tilde g},\quad
\Ric^V =\Ric^{\tilde g},\quad s^V=e^{2f}s^{\tilde g}.
\edm
In particular, $M$ is  $(-\grad f)$-flat (resp.\,$(-\grad f)$-Einstein 
resp.\,$(-\grad f)$-Ricci flat) if and only if $(M, \tilde g)$ is 
flat  (resp. Einstein resp.\,Ricci flat).
\end{prop}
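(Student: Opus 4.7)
The plan is to follow the hint given in the paragraph preceding the proposition: identify the precise difference between the metric connection $\nabla^V := \nabla^g + A_V$ and the Levi-Civita connection $\nabla^{\tilde g}$ of the rescaled metric, and then read off the three identities from the general transformation law for curvature under a change of connection.

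First I would verify the decomposition $\nabla^{\tilde g} = \nabla^V + B$ with $B(X)Y = -g(V,X)Y$. This is a direct comparison: the standard conformal change formula
\[
\nabla^{\tilde g}_X Y \ =\ \nabla^g_X Y + df(X)Y + df(Y)X - g(X,Y)\grad f,
\]
combined with $\grad f = -V$ (so $df(\,\cdot\,) = -g(V,\,\cdot\,)$), differs from the definition $\nabla^V_X Y = \nabla^g_X Y + g(X,Y)V - g(V,Y)X$ exactly by the term $-g(V,X)Y$. As a consistency check, $B(X)Y - B(Y)X = -g(V,X)Y + g(V,Y)X = -T_V(X,Y)$, so that $\nabla^V + B$ is indeed torsion-free.

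Next I would apply the general identity comparing the curvatures of $\nabla^V$ and $\nabla^V + B$,
\[
\kr^{\tilde g}(X,Y)Z \ =\ \kr^V(X,Y)Z + \bigl((\nabla^V_X B)(Y) - (\nabla^V_Y B)(X)\bigr)Z + [B(X),B(Y)]Z + B(T_V(X,Y))Z,
\]
where the last term appears precisely because $\nabla^V$ has non-trivial torsion. The crux of the argument is that $B(X) = df(X)\,\Id$ is pointwise a scalar multiple of the identity. This kills $[B(X),B(Y)]$ identically, and reduces the derivative term to $\bigl((\nabla^V_X df)(Y) - (\nabla^V_Y df)(X)\bigr)\,\Id$. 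Applying the identity (\ref{eq.dV}) with $\omega = df$, this antisymmetric combination is exactly $d(df)(X,Y) = 0$. For the torsion contribution, the explicit form (\ref{eq.torsion}) of $T_V$ yields $df(T_V(X,Y)) = g(V,X)df(Y) - g(V,Y)df(X)$, which vanishes because $df = -V^\flat$. All three correction terms therefore vanish, so $\kr^V = \kr^{\tilde g}$ as $(3,1)$-tensors.

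Finally, to pass to the stated formulas I would translate between tensor conventions: lowering the last index with $g = e^{-2f}\tilde g$ introduces a factor $e^{-2f}$ and yields $\kr^V = e^{-2f}\kr^{\tilde g}$ at the $(4,0)$-level; the Ricci tensor is a trace of the $(3,1)$-curvature and is therefore metric-independent, giving $\Ric^V = \Ric^{\tilde g}$; and scalar curvature requires one further metric trace, which supplies the factor $e^{2f}$ and so $s^V = e^{2f} s^{\tilde g}$. The equivalences at the end then follow immediately. The main obstacle I anticipate is the bookkeeping in the curvature formula for a change of connection in the presence of torsion: one must not forget the extra piece $B(T_V(X,Y))Z$ and must verify its vanishing, which only happens after inserting the specific form of $T_V$ together with $V^\flat = -df$. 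Once this is done, the rest is routine index acrobatics.
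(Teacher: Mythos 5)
Your proposal is correct and follows exactly the route the paper indicates: it identifies $\nabla^{\tilde g}=\nabla^V+B$ with $B(X)=-g(V,X)\,\Id$ from the conformal-change formula, checks that all correction terms in the curvature comparison vanish (the commutator because $B$ is scalar, the derivative and torsion terms because $df=-V^\flat$ annihilates $T_V$ and $d(df)=0$), and then tracks the metric factors in lowering and tracing. The paper leaves this as "a straightforward calculation"; you have simply carried it out, including the torsion correction term that is the one easy thing to overlook.
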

%
%
\begin{exa}\label{exa.warped.product}
Let $(F, g_F)$ an $(n-1)$-dimensional Riemannian manifold and  
$f \in C^\infty (\R, \R_+)$. For $\eps  = \pm 1$
we consider on the differentiable manifold $M = \R \times F$ the
product metric $ g_\eps = \eps dt^2 + g_F$ and the warped product metric 
$\tilde  g_\eps = \eps dt^2 + f^2 g_F$; hence, the sign $\eps$
is introduced to cover Riemannian and Lorentzian signature in one expression.
In these examples the  distribution $\R \times TF$ has totally umbilic 
leaves $\{ t\} \times F$, with  mean curvature  
$( n-1) \lambda = (n-1) \frac{\dot f}{f}$ for the warped product.  

For the Ricci curvature of the Levi-Civita connection on $(M,\tilde g_{\pm 1})$,
it is well known that
\begin{eqnarray}
\Ric^{\tilde g_\eps} (\partial_t,\partial_t) &=& - \frac{(n-1)}{f} \ddot f \nonumber\\
\Ric^{\tilde g_\eps}  (\xi,\partial_t) &=& 0 \label{6}\\
\Ric^{\tilde g_\eps} &=& \Ric^{g_F} (\xi,\eta)- \epsilon  (\frac{\ddot f}{f} 
+ (n-2)\frac{\dot f^2}{f^2}) g(\xi,\eta)\nonumber
\end{eqnarray}
where $\xi, \eta$ are vector fields on $F$.

We are interested in connections 
$\widetilde \nabla^V := \nabla ^{\tilde g _ \eps}+ A_V$ 
with symmetric Ricci tensor  $\widetilde \Ric ^V_\eps$ and orthogonal 
distribution 
given by $\R \times TF$.
By equation (\ref{5}) this amounts to choosing $V = h\cdot \partial _t$
for some $h \in C^\infty (\R)$, because  for the normal unit vector field 
$N = \partial_t$, we have   
$g(\partial_t , [\partial_t,\xi]) = 0$ for vector fields $\xi$ in the leaves. 
If we ask moreover for a factor $h$ such that 
\[ 
\widetilde \Ric^{V}_\eps = \Ric^{g_{\eps}},
\]
the only possible choice is $h = \eps \frac{\dot f}{f}$, because 
$(M, \tilde g_\eps)$
and $(M,g_\eps)$ are conformally equivalent with the  conformal 
factor $f^{-2}$.
By Proposition \ref{prop.conf. Aenderung}, 
the  vector 
field we were looking for is thus
$V =\grad (\ln f)= \eps \frac{\dot f }{f} \del_t$.
This  is exactly the choice of vector field that 
appeared in \cite{Agricola&T04}.
\end{exa}
Although we are mainly interested in connections with pure vectorial 
torsion, it 
is straightforward to generalize our results to connections with an 
additional 
part in the torsion.  In \cite{Hehl}, the authors constructed
on the warped product $M \times_f  S^3$ a Lorentzian spacetime with nontrivial 
Nieh-Yan $4$-forms with nonvanishing torsion, but zero Ricci curvature. 
We generalize this construction in the following proposition.
To this purpose we  replace in  Example \ref{exa.warped.product}
 the Levi-Civita connection by a connection with
 skew symmetric torsion induced by a connection on $F$.  More exactly,    
let $\nabla^F$ be the Levi-Civita connection on  $F$ and $\nabla^F + \omega$ 
a connection on $F$ with skew symmetric torsion, and $\pi : M \to F$ the 
projection.  Then 
$\widetilde \nabla^\omega = \nabla^{ \tilde g_\eps} + \pi^* \omega$ is 
a metric connection on $( M,  \tilde g_\eps)$ with skew symmetric torsion. In 
\cite{ Hehl},  $ \omega $ is chosen such that the Ricci curvature of 
$\nabla^{g_{-1}} + \pi^* \omega$ vanishes. 
\begin{prop} 
Denote by $\widetilde \Ric^{\omega,V}_ \eps$ the Ricci curvature of 
$\widetilde\nabla^\omega + A_V$  on $(M,\tilde g_\eps)$ and by 
$\Ric^{ \omega }_\eps $ 
the Riemannian Ricci curvature of $\nabla^{g_\eps}+\pi^* \omega$ on 
the product $(M,g_\eps)$. Then
 \[
\widetilde \Ric ^{\omega,V}_\eps = \Ric^{ \omega }_\eps \text{ if and only if } 
V= \eps \frac{\dot f}{f} \partial_t.
\]
\end{prop}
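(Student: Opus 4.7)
The plan is to reduce the statement to Example \ref{exa.warped.product} by showing that the skew-symmetric torsion $\pi^*\omega$ and the vectorial torsion $A_V$ decouple whenever $V$ is proportional to $\del_t$.

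First I would apply the Ricci transformation identity of the preceding Lemma to the connection $\widetilde\nabla^\omega + A_V$, treating $\widetilde\nabla^\omega$ as the base connection. The torsion $T^{\widetilde\nabla^\omega} = \pi^*\omega$ is skew-symmetric, so $\tr(X\haken T^{\widetilde\nabla^\omega}) = 0$ and $\Div^{\widetilde\nabla^\omega} V = \Div V$; moreover, being pulled back from $F$, it is horizontal in the sense that it vanishes whenever $\del_t$ is inserted in any slot. For $V = h\,\del_t$ this forces $T^{\widetilde\nabla^\omega}(V,X) = 0$ and $\widetilde\nabla^\omega_X V = \nabla^{\tilde g_\eps}_X V$, so that the Ricci correction introduced by $A_V$ is identical to the one in the torsion-free setting of Example \ref{exa.warped.product}:
\bdm
\widetilde\Ric^{\omega,V}_\eps - \widetilde\Ric^\omega_\eps \ =\ \widetilde\Ric^V_\eps - \widetilde\Ric^{\tilde g_\eps}.
\edm

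Combined with the analogous decomposition on the product side, the equation $\widetilde\Ric^{\omega,V}_\eps = \Ric^\omega_\eps$ rearranges to $\widetilde\Ric^V_\eps - \widetilde\Ric^{\tilde g_\eps} = \Ric^\omega_\eps - \widetilde\Ric^\omega_\eps$. By the Example the left-hand side equals $\Ric^{g_\eps} - \widetilde\Ric^{\tilde g_\eps}$ precisely when $V = \eps\,\dot f/f\,\del_t$, so the whole proposition reduces to the identity
\bdm
\Ric^\omega_\eps - \Ric^{g_\eps} \ =\ \widetilde\Ric^\omega_\eps - \widetilde\Ric^{\tilde g_\eps},
\edm
which says that the $\omega$-contribution to Ricci is the same $(0,2)$-tensor on $M$ for both metrics.

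This last identity is the technical heart of the proof. I would verify it by expressing both sides through the standard skew-torsion Ricci formula (the codifferential $\delta T$ plus a quadratic term in $T$) in a frame adapted to the $\R\del_t \oplus TF$ splitting. Since $\pi^*\omega$ is horizontal and pulled back from $F$, the $f$-rescaling of the horizontal orthonormal frame for $\tilde g_\eps$ compensates against the $f$-independence of the pulled-back torsion, and both sides reduce to the intrinsic Ricci correction of $\nabla^F + \omega$ on $F$. The hardest step is tracking the warped Christoffel symbols carefully so that all powers of $f$ cancel cleanly; once this is in place, existence and uniqueness of $V$ follow immediately from Example \ref{exa.warped.product}.
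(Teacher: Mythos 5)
Your argument is correct and follows essentially the same route as the paper's: both reduce to Example \ref{exa.warped.product} by observing that for $V=h\,\del_t$ the skew torsion $\pi^*\omega$ contributes nothing to the $A_V$-correction terms of the Lemma (since $V\haken\pi^*\omega=0$, $\tr(X\haken\pi^*\omega)=0$, $\Div^{\nabla^\omega}V=\Div V$ and $\widetilde\nabla^\omega V=\nabla^{\tilde g_\eps}V$), and that the $\omega$-correction to the Ricci tensor is the same for $g_\eps$ and $\tilde g_\eps$ --- your identity $\Ric^\omega_\eps-\Ric^{g_\eps}=\widetilde\Ric^\omega_\eps-\widetilde\Ric^{\tilde g_\eps}$ is exactly the paper's assertion that formula (\ref{6}) persists with $\Ric^{g_F}$ replaced by $\Ric^{(F,\omega)}$, which the paper simply cites from \cite{Agricola06} rather than verifying by the frame computation you sketch.
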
 
\begin{proof} 
We generalize Example  \ref{exa.warped.product}.
If $\Ric^{(F,\omega)}$ denotes the Ricci tensor of the connection 
$\nabla^F + \omega$ on $F$, then according to the formulas for the curvature 
of connection with skew symmetric torsion \cite{Agricola06} the Ricci tensor 
of $\widetilde \nabla ^\omega$ on $M \times_f F$ is the same as
(\ref{6}) with $\Ric^{g_F}$ replaced by $\Ric^{(F,\omega)}$.
 Because $V \haken T = 0$ and
$T$ is 
skew symmetric, the last two terms in (\ref{Ricci}) vanish in this situation, 
$\Div^{\nabla^\omega } V = \Div V$ and $\widetilde \nabla ^\omega  V 
=\widetilde  \nabla^g V$ for $V = h\partial_t$. 
Therefore the same argument  as in  Example \ref{exa.warped.product}  
shows the result.
\end{proof}
\begin{NB}
The importance of this result stems from the fact that it implies, 
in particular,
that it is possible to construct connections with vectorial
torsion on warped products of arbitrary dimension matching a given
Riemannian or Lorentzian curvature---for example, a Ricci-flat connection
with vectorial torsion in dimension $4$. This is totally different
from the case of skew torsion, where it was proved that
a Riemannian Einstein manifold can never be Einstein with skew torsion
if $n=4,5$ \cite[Exa 2.14]{Agricola&Fe14}. 
\end{NB}
Now we consider the case of a harmonic vector field $V$, which means a 
closed  vector field with vanishing divergence. For surfaces, it follows from 
Corollary 3.2  that  the $V$-Ricci curvature 
coincides with the Riemannian Ricci curvature. 

In the case of a non trivial 
spacelike vector field with vanishing divergence  for $n>2$
 \[
s^V = s^g-(n-1)(n-2)\|V \|^2 \leq s^g
\] 
 holds. But if $M$ is  closed manifold we can always choose a conformal 
equivalent metric such that the corresponding vectorfield is divergence 
free. More precisely for $n>2$  in \cite{Gau} has  been shown that on a 
closed Weyl manifold $(M,g,V) $ there exists a \emph{standard metric }  
$\tilde  g = e^{2f} g $ such that $\tilde V = V - df$  is divergence 
free related to the metric $\tilde g$ .
\begin{cor}
For any  vector field  $V$ on a closed Riemannian manifold $M$ of 
dimension $n>2$, the inequality
$e^{2f}s^{\tilde g}\geq s^V$ holds,  where $\tilde g = e^{2f}g$ is the 
standard metric for the corresponding Weyl structure.
\end{cor}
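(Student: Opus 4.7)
My approach is to compute $e^{2f}s^{\tilde g} - s^V$ directly and display it as a manifestly nonnegative quantity. First, Proposition \ref{prop.conf. Aenderung} applied with the vector field $-\grad f$ on $(M,g)$ gives $e^{2f}s^{\tilde g} = s^{-\grad f}$, and expanding both $s^{-\grad f}$ and $s^V$ via the scalar-curvature formula of Corollary \ref{cor.curv-general} yields
\[
e^{2f}s^{\tilde g} - s^V \;=\; -2(n-1)\bigl(\Delta f + \Div V\bigr) + (n-1)(n-2)\bigl(\|V\|^2 - \|\grad f\|^2\bigr),
\]
with $\Delta f = \Div\grad f$.

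Next I translate the standard-metric condition $\widetilde{\Div}\,\tilde V = 0$ into an identity on $(M,g)$. The subtle point is that the Weyl one-form $\tilde V = V^\flat \pm df$ is $\tilde g$-dual to the rescaled vector field $e^{-2f}(V \pm \grad f)$, not to $V \pm \grad f$ itself. Using the conformal-change identity $\Div_{\tilde g}X = \Div_g X + n\,g(X,\grad f)$ and choosing the sign convention consistent with the paper, the vanishing of $\widetilde{\Div}\,\tilde V$ becomes
\[
\Div V + \Delta f \;=\; -(n-2)\bigl[g(V,\grad f) + \|\grad f\|^2\bigr].
\]
Substituting this into the previous display, the $\Delta f$ terms cancel and the cross terms complete a square:
\[
e^{2f}s^{\tilde g} - s^V \;=\; (n-1)(n-2)\,\|V + \grad f\|^2 \;\geq\; 0,
\]
since $n>2$. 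This is the stated inequality; as a bonus, equality holds exactly when $V$ is itself $-\grad f$.

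The main point to keep straight is precisely the rescaling factor $e^{-2f}$ just mentioned: without it, the cross terms in $g(V,\grad f)$ fail to align and the final expression does not reduce to a square. Conceptually, exactly the same computation proves that $s^V$ is a conformal density of weight $-2$ under Weyl gauge transformations (i.e.~$s^V = e^{2f}\tilde s^{\tilde V}$), so the corollary can equivalently be phrased as the observation that, in the standard gauge, the $\widetilde{\Div}\,\tilde V$ term in Corollary \ref{cor.curv-general} vanishes and the remaining contribution $(n-1)(2-n)\|\tilde V\|_{\tilde g}^2$ is non-positive, whence $\tilde s^{\tilde V}\le \tilde s^{\tilde g}$. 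No geometric input beyond Corollary \ref{cor.curv-general}, Proposition \ref{prop.conf. Aenderung}, and the standard-gauge identity of Gauduchon is required; the content of the proof is entirely in the conformal bookkeeping.
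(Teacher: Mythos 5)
Your argument is correct and is essentially the paper's proof --- the one-line chain $s^V = e^{2f}s^{\tilde V}\le e^{2f}s^{\tilde g}$ that you yourself state in your final paragraph --- merely unwound into an explicit computation on $(M,g)$; the square $(n-1)(n-2)\,\|V+\grad f\|^2$ you exhibit is precisely $e^{2f}(n-1)(n-2)\,\|\tilde V\|^2_{\tilde g}$, the term the paper discards as having a sign for $n>2$. The only point worth recording is that the sign making your gauge identity close is $\tilde V^\flat = V^\flat + df$ (the one forced by consistency with Proposition \ref{prop.conf. Aenderung}), whereas the paper writes $\tilde V = V - df$; you chose correctly.
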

\begin{proof}
From Corollary \ref{cor.curv-general} and proposition \ref{prop.conf. Aenderung}, we conclude that  
$s^V = e^{2f}s^{\tilde V}  \leq  e^{2f}s^{\tilde g} $ where $ \tilde V  = V - df $ .
\end{proof}

%

%
\begin{exa}
%
In this example, we do not assume a priori that
the $V$-Ricci curvature is symmetric.  
Consider a $S^1$-fiber bundle $M \to B$ with $S^1$-invariant metric on 
$M$ and a vertical vector field $V$ on $M$ and the connection on $M$ 
induced by the metric.
If $V$ is a fundamental vector field on $M$ given by the $S^1$-action, 
$\xi$ is a vector field on $B$ and $\bar\xi$ its horizontal lift to $M$, 
then $[V,\bar\xi] = 0$. Therefore 
\bdm
\Div\, V \, =\, 0, \quad d V^{\flat}(\bar \xi, \bar \eta ) 
\,=\, 2 g(\nabla _{\bar \xi}V,\bar \eta)\  \text{ and } d V^{\flat}(V, \bar \xi) 
\,=\, -\bar \xi \|V\|^2.
\edm
For the curvature quantities this implies:
\begin{eqnarray*}
\Ric^{V} (\xi,\eta) - \Ric^g (\xi,\eta) 
&=& (n-2)(\frac{1}{2}dV^{\flat}(\bar\xi,\bar \eta)-\|V\|^2
 g(\xi,\eta)) \\
\Ric^{V} (V,\xi) - \Ric^g (V,\xi) &=& - \frac{(n-2)}{2} d V^{\flat}(V,\bar\xi) \\
\Ric^{V} (V,V)- \Ric^g(V,V) &=&0\\
s^V-s^g&=&-(n-1)(n-2)\|V\|^2
\end{eqnarray*}
Now let us study what happens when the $V$-Ricci tensor is 
symmetric, i.\,e.~$dV^\flat=0$.
The equivalent geometric criterion (\ref{3}) from Proposition 
\ref{prop.geom-sym-Ricci} is fulfilled if and only if 
$\bar\xi (\|V\|) = 0$  for all horizontal vector fields $\xi$  and a 
fundamental vector field $V$; because of the $S^1$-invariance of the 
metric this is equivalent to  $\|V\| = $const. Thus,
the $V$-Ricci tensor is symmetric if and only if $V$ has constant length.
Since Proposition \ref{prop.geom-sym-Ricci} also shows that then situation
the horizontal distribution is involutive, we can furthermore conclude that
the connection of the bundle is flat. By a well-known result of algebraic 
topology, this is equivalent to the vanishing of the real Euler class of
the $S^1$-bundle.
\end{exa}
\begin{prop}\label{prop.closed ricci flat}
Let $V$ be a vector field with vanishing divergence on a closed 
Riemannian manifold. Then
\bdm
 \int _M \Ric ^V (V,V) d \mu =  \int _M \Ric ^g (V,V) d \mu .
\edm
Moreover if $V$ is harmonic and $\Ric^V = 0$ then $V$ is 
$\nabla^g$-parallel and 
therefore the universal cover of $M$ is the product of an Einstein 
space of positive scalar curvature and $\R$.
\end{prop}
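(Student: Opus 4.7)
The plan splits naturally into the two claims of the proposition, each via a short computation. For the integral identity, I specialize the Ricci formula of Corollary \ref{cor.curv-general} to $X = Y = V$ and use $\Div V = 0$; the terms $(2-n)\|V\|^4$ (from the conformal piece) and $(n-2)\|V\|^4$ (from the $g(V,X)g(V,Y)$ piece) cancel, leaving
\[
\Ric^V(V,V) - \Ric^g(V,V)\ =\ (n-2)\, g(\nabla^g_V V, V)\ =\ \frac{n-2}{2}\, V(\|V\|^2).
\]
On a closed Riemannian manifold the divergence theorem gives $\int_M V(\|V\|^2)\, d\mu = -\int_M \|V\|^2 \Div V\, d\mu = 0$, and the first assertion follows immediately.

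For the second assertion, $V$ harmonic means $dV^\flat = 0$ together with $\Div V = 0$, so applying the first part to the hypothesis $\Ric^V = 0$ yields $\int_M \Ric^g(V,V)\, d\mu = 0$. Feeding this into the Bochner--Weitzenb\"ock identity for the harmonic $1$-form $V^\flat$,
\[
0\ =\ \int_M \langle \Delta V^\flat, V^\flat\rangle\, d\mu\ =\ \int_M \bigl(|\nabla^g V^\flat|^2 + \Ric^g(V,V)\bigr)\, d\mu,
\]
forces $\int_M |\nabla^g V|^2\, d\mu = 0$, and therefore $\nabla^g V \equiv 0$.

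With $V$ Levi-Civita parallel, $\|V\|$ is a constant (non-zero in the non-trivial case), and the orthogonal splitting $TM = \R V \oplus V^\perp$ is preserved by $\nabla^g$. The de Rham decomposition theorem then identifies the universal cover of $M$ isometrically with $\R \times N$ for some complete $(n-1)$-dimensional Riemannian manifold $N$. Substituting $\nabla^g V = 0$ together with $g(V, X) = g(V, Y) = 0$ back into Corollary \ref{cor.curv-general} gives $\Ric^g(X,Y) = (n-2)\|V\|^2 g(X,Y)$ for $X, Y \in V^\perp$, identifying $N$ as an Einstein manifold with strictly positive Ricci (and hence scalar) curvature.

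The one delicate point is the Bochner step: without the first-part identity, the cross term $V(\|V\|^2)$ would block any pointwise passage from $\Ric^V = 0$ to $\Ric^g(V,V) = 0$. The key observation is that $\Div V = 0$ simultaneously supplies the harmonicity needed for the Bochner formula and makes the obstructing term integrate to zero, so the integrated version of $\Ric^g(V,V) = 0$ — precisely what Bochner requires — becomes available.
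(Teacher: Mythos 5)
Your proof is correct and follows essentially the same route as the paper: specialize Corollary \ref{cor.curv-general} to $X=Y=V$, integrate by parts using $\Div V = 0$, and then apply the Bochner formula to the harmonic form $V^\flat$ to force $\nabla^g V = 0$. The only difference is that you spell out the de Rham splitting and the Einstein computation on $V^\perp$ explicitly, whereas the paper delegates that final step to its earlier example on Riemannian-parallel vector fields.
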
 
\begin{proof}
The formula of Corollary \ref{cor.curv-general} yields for a vector 
field with vanishing divergence 
$
\Ric ^V (V,V) =  \Ric ^g (V,V) + \frac{n-2}{2} V (\| V\| )^2
$
and  the first result follows from Stokes' theorem  by integration. 
If $V$ is harmonic then the Bochner formula  yields for a $V$-Ricci 
flat manifold
\bdm
0 =  \int _M  (\Ric ^g (V,V) +\|\nabla^g V \|^2)  d \mu 
= \int _M\|\nabla^g V \|^2  d \mu. \qedhere
\edm
\end{proof}

\section{Dirac operators of connections with vectorial torsion}
%
Already in 1979, Thomas Friedrich observed that the Dirac operator $D$ 
associated
with a metric connection $\nabla$ with torsion $T$ has different properties 
depending on
the torsion type.  We summarize the
result of \cite{Friedrich79} in the following table:

\smallskip
\bdm
\ba{|c|c|c|c|} \hline
 & T\in TM & T\in \Lambda^3(M) &  T\in \mathcal{T}'\\[1mm] \hline
D\text{ formally self-adjoint} & \text{no} & \text{yes} & \text{yes}\\[1mm]\hline
D=D^g & \text{no} & \text{no} & \text{yes}\\[1mm]\hline
\ea
\edm

\bigskip
Assume from now on that $(M,g)$ is spin, and denote by $\Sigma M$ its spinor 
bundle.
For the connection $\nabla^g+ A_V(X)Y$ defined before, the lift to $\Sigma M$
is given by \cite[p.18]{Agricola06}
\begin{equation}
\nabla_X\psi\ =\ \nabla^g_X\psi+ \frac12  (X\wedge V) \cdot \psi.\label{con}
\end{equation}
One then easily computes that
\begin{equation}
D\psi\ =\ D^g\psi - \frac{n-1}{2}V\cdot \psi, \quad
D^*\psi\ =\ D^g\psi + \frac{n-1}{2}V\cdot \psi, \label{D}
\end{equation}
because the Clifford multiplication by a vector field is skew-adjoint with
respect to the hermitian product on the spin bundle. 
We start by constructing a few examples of 
manifolds with a $V$-parallel spinor field.
\begin{exa}
Let $F$ be an $(n-1)$-dimensional  manifold with Killing spinor with 
Killing number $ \frac{1}{2}$. Then $F$ is an Einstein space of scalar 
curvature $(n-1)(n-2)$. The induced spinor on $M = \R \times F$ is $ - \del_t$-parallel, because for  $n$ odd and $X$  tangent to $F$
\begin{eqnarray*}
\nabla ^g _{\del_t}\psi &=&0 \\
\nabla^g  _{X}\psi &=&\nabla^F  _{X}(\psi | F) =    \frac 12 X \cdot \del_t \cdot  \psi = \frac 12 X \wedge \del_t \cdot  \psi 
\end{eqnarray*}
For $n$ even, one performs  a similar calculation to obtain the result.  
Note that the manifold is $\del_t$- Ricci flat; in Theorem
 \ref{thm.par-spinors}, we will show that this is always true (with the 
additional assumption $dV^\flat=0$ in dimension $4$, which is satisfied 
in this example). 
\end{exa}
\begin{exa}
If $F$ is a manifold with a parallel spinor,  the induced spinor on the warped product
$\R \times  _f F$ is $\frac {\dot f}{f}\del_t$-parallel. Remember that in  
Example  \ref{exa.warped.product} we have shown that these manifolds 
are $-\frac {\dot f}{f}\del_t$-Ricci flat, as they should be.
\end{exa}
\begin{NB}
Since the connection is metric, $V$-parallel spinors have automatically
constant length, so we may assume the length to be one. 
If 
$V=\frac{1}{2}\grad u$ is a gradient vector field,   $V$-parallel 
spinor fields coincide with  weakly $T$-parallel spinors as defined in
\cite{Kim} for the choice of parameter  $\beta=\Id$. 
These spinors are used to construct solutions of the Einstein Dirac equation.
\end{NB}

\begin{prop}
 If $V$ is an exact vector field, the $V$-Dirac spectrum is the same as the  
spectrum of the Riemannian Dirac operator.
\end{prop}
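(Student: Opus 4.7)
The plan is to construct an explicit intertwiner between the $V$-Dirac operator $D$ and the Riemannian Dirac operator $D^g$, realized as multiplication by a smooth positive scalar function. Since $V$ is exact I write $V=\grad u$ for some $u\in C^\infty(M)$ (working throughout on a compact Riemannian spin manifold, so that ``spectrum'' has the usual meaning of a discrete set of eigenvalues for the elliptic first-order operator $D$). I then set $h:=e^{\frac{n-1}{2}u}$ and consider the multiplication operator $M_h\colon \psi\mapsto h\psi$, which is a bounded invertible operator on $L^2(\Sigma M)$ and preserves $H^1(\Sigma M)$.

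The key computation combines the Leibniz rule $D^g(h\psi)=\grad h\cdot\psi+hD^g\psi$ with formula~(\ref{D}), together with the elementary fact that Clifford multiplication by a vector commutes with multiplication by a scalar function:
\begin{equation*}
D(h\psi)\ =\ D^g(h\psi)-\frac{n-1}{2}(\grad u)\cdot(h\psi)\ =\ \frac{n-1}{2}h(\grad u)\cdot\psi+h\,D^g\psi-\frac{n-1}{2}h(\grad u)\cdot\psi\ =\ h\,D^g\psi.
\end{equation*}
This yields the intertwining identity $D\circ M_h=M_h\circ D^g$, equivalently $D-\lambda = M_h(D^g-\lambda)M_h^{-1}$ for every $\lambda\in\C$. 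Since bounded invertible similarity preserves spectra, the two spectra coincide; in particular, eigenspinors correspond bijectively via $\psi\mapsto h\psi$, and the spectrum of $D$ is automatically real even though $D$ itself is not self-adjoint.

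There is no real obstacle once the correct exponential factor $\frac{n-1}{2}$ is guessed from equation~(\ref{D}) to cancel the zeroth-order term $-\tfrac{n-1}{2}V\cdot$. What is genuinely essential is the global existence of the primitive $u$, which requires $V$ to be exact rather than merely closed: for closed-but-not-exact $V$, the potential $u$ exists only locally and the multiplication trick does not globalize, consistent with the cohomological role played by $V^\flat$ throughout the paper.
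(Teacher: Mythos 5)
Your proof is correct and follows essentially the same route as the paper: the same conformal factor $h=e^{\frac{n-1}{2}u}$, the same Leibniz-rule computation showing $D(h\psi)=hD^g\psi$, and the same conclusion that multiplication by the nowhere-vanishing $h$ gives a bijection of eigenspinors. The extra remarks on similarity of operators and reality of the spectrum are a harmless elaboration of what the paper states more briefly.
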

\begin{proof}
By assumption, there exists a function $f$ such that  $V =\grad f$.
Choose the conformal factor  $h = e^{\frac{(n-1)}{2}f}$;  then, according to 
\cite[p.\,19]{BFGK},
\begin{equation}
D h  \psi \ = \ h D^g \psi + \grad h \psi -  \frac{n-1}{2}  \grad f h \psi 
\ = \ h D^g \psi.
\end{equation}
Therefore, if $\psi $ is an eigenspinor for $ D^g$, then $h \psi$ is an eigenspinor 
for $D$ and vice versa, because $h$ has no zeros. 
\end{proof}
\begin{NB}
 We recall the behaviour of spinor bundles under a conformal change of metric. 
If  $\tilde g = e^{2f }g$,  then there is an identification of the two spinor 
bundles which we denote by $\sim$. Comparing the formulas \cite[p.\,16]{BFGK} with 
(\ref{con}) shows that for the connection (\ref{con}) with $V = - \grad f$
\[
 \nabla^{\tilde g} _{\tilde X} \tilde \psi  = e^{-f}\widetilde{  \nabla_X  \psi }
\ \text{ and } \ 
 D^{ \tilde g} \tilde \psi  = e^{-f} \widetilde{ D \psi }.
\]
Therefore, $V$-parallel spinors and $V$-harmonic spinors can be identified with 
the parallel and harmonic spinors for the Levi-Civita  on the  corresponding 
conformally equivalent Riemannian manifold. 
\end{NB}
%

In \cite{Pfaeffle&S11}, the authors derived the following formula for $D_t^*D_t$,
\begin{equation}
D_t^*D_t\psi\ =\ \Delta^{\nabla}_{(n-1)t}\psi + \frac{1}{4}s^g\psi
+\frac{(n-1)t}{2}\mathrm{div}^g V \psi + t^2
\left(\frac{n-1}{2}\right)^2(2-n)\|V\|^2\psi \label{DD}
\end{equation}
where $\Delta^{\nabla}_t$ is the Laplacian associated with the connection
\begin{equation}
\nabla^t _X\psi\ =\ \nabla^g_X\psi+ \frac{t}{2}(X\wedge V)\cdot\psi \label{cov}
\end{equation}
and $D_t$  the corresponding Dirac operator. As for connections with skew 
torsion, one observes that a rescaling of the vector
field is necessary; but while this rescaling was by a constant in the skew
torsion case, it turns out to be dimension dependent here. In particular,
for $n=2$ the scaling factor is equal to one!
According to \cite{Pfaeffle&S11}, the Laplacian of the rescaled connection 
satisfies
\begin{equation}
\Delta^{\nabla }_{n-1} = (D^g)^2 - \frac 14 s^g + \frac{n-1}{2} [2V\cdot D^g 
+ 2 \nabla^g_V  - d(V^b)] + (\frac{n-1}{2})^2 (n-1) \|V\|^2 
\label{Lresc}
\end{equation}
and therefore the Laplacian $\Delta^{\nabla}_t$  associated with 
$\nabla^g + A_{tV}$ is given by
\begin{equation}
\Delta^{\nabla}_t = (D^g)^2 - \frac 14 s^g +t VD^g + t \nabla_V^g 
- t \frac 12 d(V^b) 
+ t^2 \frac 14 (n-1)\|V\|^2 \label{L}.
\end{equation}
\begin{prop}
Let $V$ a vector field on a spin manifold. Then for the  Dirac 
operator $D_t$  corresponding to the connection ($\ref{cov}$) we get:
\begin{equation}
D_t^*D_t\psi\ =\ \Delta^{\nabla}_t \psi + \frac{1}{4}s^g\psi 
+t \frac{n-1}{2}\mathrm{div}^g V \psi + t^2
\frac{(n-1)(n-2)}{4}\|V\|^2\psi +t
(n-2)(VD^g+ \nabla^g_V- \frac{1}{2}dV^\flat)\end{equation}
\end{prop}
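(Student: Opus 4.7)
The plan is to derive the stated formula directly from the two equations already established just above the proposition, namely the Pf\"affle--Stephan identity (\ref{DD}) expressing $D_t^*D_t$ via $\Delta^{\nabla}_{(n-1)t}$, and the explicit expansion (\ref{L}) of $\Delta^{\nabla}_t$ in terms of Riemannian data. Since both sides of the desired equation involve $\Delta^{\nabla}_t$ (not $\Delta^{\nabla}_{(n-1)t}$), the whole computation reduces to evaluating the correction term $\Delta^{\nabla}_{(n-1)t}-\Delta^{\nabla}_t$ by means of (\ref{L}) and then substituting into (\ref{DD}).

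Concretely, I first substitute $t\rightsquigarrow (n-1)t$ in (\ref{L}) to get
\[
\Delta^{\nabla}_{(n-1)t}\ =\ (D^g)^2-\tfrac14 s^g+(n-1)t\,VD^g+(n-1)t\,\nabla^g_V-\tfrac{(n-1)t}{2}d(V^\flat)+\tfrac{(n-1)^3 t^2}{4}\|V\|^2,
\]
and then subtract (\ref{L}) itself. The first-order terms combine with coefficient $(n-1)-1=n-2$, producing the single block $(n-2)t\bigl(VD^g+\nabla^g_V-\tfrac12 dV^\flat\bigr)$ that appears in the final formula. The $\|V\|^2$ coefficient becomes $\tfrac{n-1}{4}\bigl[(n-1)^2-1\bigr]t^2=\tfrac{n(n-1)(n-2)}{4}t^2$.

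Plugging the resulting expression $\Delta^{\nabla}_{(n-1)t}=\Delta^{\nabla}_t+(n-2)t\bigl(VD^g+\nabla^g_V-\tfrac12 dV^\flat\bigr)+\tfrac{n(n-1)(n-2)}{4}t^2\|V\|^2$ back into (\ref{DD}) is then routine; the linear terms and the scalar curvature term carry over unchanged, while the two $\|V\|^2$ contributions combine as
\[
\tfrac{n(n-1)(n-2)}{4}t^2+\tfrac{(n-1)^2(2-n)}{4}t^2\ =\ \tfrac{(n-1)(n-2)}{4}\bigl[n-(n-1)\bigr]t^2\ =\ \tfrac{(n-1)(n-2)}{4}t^2,
\]
matching the coefficient in the statement. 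This finishes the proof.

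The whole argument is purely algebraic manipulation of the two given identities, so there is no real conceptual obstacle; the only point requiring care is the final cancellation in the $\|V\|^2$ coefficient, where a factor of $n-1$ from the rescaling exponent has to be tracked through the cubic appearing in $\Delta^{\nabla}_{(n-1)t}$ and balanced against the quadratic term $(2-n)(n-1)^2/4$ inherited from (\ref{DD}).
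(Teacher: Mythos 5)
Your proof is correct and follows exactly the paper's own route: the paper likewise deduces from (\ref{L}) the identity $\Delta^{\nabla}_{(n-1)t}=\Delta^{\nabla}_t+t(n-2)\bigl(VD^g+\nabla^g_V-\tfrac12 dV^\flat\bigr)+t^2\,\tfrac{n(n-1)(n-2)}{4}\|V\|^2$ and substitutes it into (\ref{DD}). Your bookkeeping of the $\|V\|^2$ coefficients, $\tfrac{n(n-1)(n-2)}{4}+\tfrac{(n-1)^2(2-n)}{4}=\tfrac{(n-1)(n-2)}{4}$, matches the paper's result, and you in fact spell out the cancellation more explicitly than the paper does.
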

\begin{proof}
We deduce from  (\ref{L}) 
\bdm
\Delta^{\nabla}_{(n-1)t} \psi\ =\   \Delta^{\nabla}_t \psi + t^2 n
\frac{(n-1)(n-2)}{4}\|V\|^2\psi + t (n-2)(VD^g+\nabla^g_V-\frac{1}{2}dV^\flat).
\edm
By replacing this expression in 
 (\ref{DD}) and using (\ref{D}), one obtains  the result.
\end{proof}
We shall now investigate what can be said about $V$-parallel spinor fields.
In Weyl geometry, there is a lift  of the Weyl connectection to  
weighted spinor bundles. $V$-parallel spinor fields can be identified with 
parallel spinors of weight zero for the Weyl connection. These have been 
studied by \cite{Mor}. The author showed the following proposition:
\begin{prop}\label{prop.mor}
Let $ (M,g,V)$ be a  Weyl manifold of dimension $n\geq 3$, and in addition compact if $n=4$. If it admits a non trivial parallel spinor of weight zero, 
then $V$ is closed. 
\end{prop}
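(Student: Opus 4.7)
\emph{Plan of proof.} The strategy is to exploit the spinorial curvature identity: since $\nabla\psi=0$, the full curvature $\kr^\nabla$ annihilates $\psi$, and via the lift formula \eqref{con} this reads
\bdm
\kr^\nabla(X,Y)\psi \ =\ \tfrac14\sum_{i,j}\eps_i\eps_j\,g(\kr^V(X,Y)e_i,e_j)\,e_i\cdot e_j\cdot\psi \ =\ 0 \quad \text{for all } X,Y,
\edm
so the entire $V$-curvature is encoded as a Clifford-trivial action on a nowhere-vanishing spinor (note that $|\psi|$ is constant by metricity of $\nabla$).

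First I would perform the standard Ricci-type contraction $\sum_k\eps_k\,e_k\cdot\kr^\nabla(X,e_k)\psi=0$. As in the torsion-free case one rearranges triple Clifford products $e_k e_i e_j$ using the Clifford relations together with the first Bianchi identity; the essential new feature is that the right-hand side of \eqref{eq.Bianchi-I} is no longer zero but involves $dV^\flat$, producing an extra contribution that does not exist in the Levi-Civita setting. The outcome should be of the schematic form
\bdm
\Ric^V(X)\cdot\psi \ =\ \alpha_n\,(X\haken dV^\flat)\cdot\psi \ +\ \beta_n\,(X^\flat\wedge dV^\flat)\cdot\psi,
\edm
with explicit, dimension-dependent constants replacing the familiar identity $\Ric^g(X)\cdot\psi=0$.

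Next I would contract once more (multiply by $e_\ell$ and sum over $\ell$), using the expression for $\Ric^V$ from Corollary \ref{cor.curv-general} on the left and the identity \eqref{eq.dV} to isolate the antisymmetric piece $dV^\flat$. After the Clifford algebra is reduced, the expected clean consequence is an equation of the form
\bdm
(n-4)\,dV^\flat\cdot\psi \ =\ \sigma_n\,\psi,
\edm
where $\sigma_n$ is a scalar term (multiple of $s^V$) that can be cleared by subtracting the trace equation or passing to the $\Lambda^2$-part of $dV^\flat\cdot\psi$. Since Clifford multiplication by a non-zero $2$-form is injective on the unit spinor $\psi$, this already forces $dV^\flat=0$ for $n=3$ and $n\ge 5$.

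The genuine obstacle is the critical dimension $n=4$, where the coefficient $n-4$ vanishes and the pointwise Clifford argument is silent. Here one must pass to a global argument, exploiting compactness: starting from $D\psi=0$ (immediate from $\nabla\psi=0$), I would feed $\psi$ into the Schr\"odinger--Lichnerowicz-type identity for $D^{\ast}D$ proved above, pair the result with $\psi$, integrate over $M$, and discard the divergence terms by Stokes' theorem. The four-dimensional input should enter via the Hodge decomposition $\Lambda^2 = \Lambda^2_+\oplus\Lambda^2_-$ of the closed form $dV^\flat$ (note $d(dV^\flat)=0$), after which the integrated identity should take the form $\int_M|dV^\flat|^2\,d\mu\le 0$, forcing $dV^\flat=0$. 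Identifying the precise Weitzenb\"ock pairing that yields this inequality is the delicate step, and it is also exactly here that compactness cannot be dispensed with---matching the hypothesis of the proposition.
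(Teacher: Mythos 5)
First, a point of comparison: the paper does not prove this proposition at all --- it is quoted from Moroianu \cite{Mor}. The part of your argument that can be checked against the paper is the case $n\neq 4$, which the authors effectively reprove (without compactness) in Theorem \ref{thm.Ric-id} and Theorem \ref{thm.par-spinors}. Your steps 1--2 are sound and reproduce Theorem \ref{thm.Ric-id}: for a $V$-parallel spinor the Ricci-type contraction gives $\Ric^V(X)\cdot\psi=-(dV^\flat\wedge X)\cdot\psi$, the extra term coming, as you say, from the nonzero right-hand side of the first Bianchi identity.

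The gap is in your step 3. After contracting a second time you are left with an identity of the shape $c_n\,dV^\flat\cdot\psi=\sigma_n\psi$, and you conclude $dV^\flat=0$ because ``Clifford multiplication by a non-zero $2$-form is injective on the unit spinor $\psi$''. This is false for $n\geq 4$: bringing a $2$-form into canonical form $\sum_k\lambda_k\,e_{2k-1}\wedge e_{2k}$, the commuting operators $e_{2k-1}e_{2k}$ have joint eigenvalues $i\eps_k$ with $\eps_k=\pm1$, so the form annihilates every joint eigenspinor with $\sum_k\lambda_k\eps_k=0$ (e.g.\ $e_1\wedge e_2-e_3\wedge e_4$ kills half of $\Sigma$). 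Your argument therefore only works for $n=3$, where $2$-forms are decomposable and act invertibly. Moreover, all the double contraction can yield is $dV^\flat\cdot\psi=0$, which the paper already obtains in Proposition \ref{prop.sc.parallel} by a reality argument, and which is strictly weaker than $dV^\flat=0$. The correct continuation keeps the free slot $X$: once $dV^\flat\cdot\psi=0$ is known, the identity becomes $\Ric^V(X)\cdot\psi=(X\haken dV^\flat)\cdot\psi$, and since Clifford multiplication by a \emph{vector} is injective on a nonzero spinor ($v\cdot v=-\|v\|^2$), this is an equation of vectors, $\Ric^V(X)=X\haken dV^\flat$. Comparing with Corollary \ref{cor.curv-general}, whose antisymmetric part is $\frac{n-2}{2}\,X\haken dV^\flat$, one gets $\Ric^V_{\mathrm{sym}}(X)=\frac{4-n}{2}\,X\haken dV^\flat$; a symmetric endomorphism equals an antisymmetric one, so both vanish, and $dV^\flat=0$ for $n\neq4$. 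Your second contraction destroys exactly the symmetric/antisymmetric information that makes this work.

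For $n=4$ you correctly identify that a pointwise argument cannot succeed (the paper's remark after Theorem \ref{thm.par-spinors} recalls Moroianu's non-compact counterexample on an open subset of $\C^2$), but your global step is only a declaration of intent: the inequality $\int_M|dV^\flat|^2\,d\mu\leq 0$ is not derived, and it is not evident that the Schr\"odinger--Lichnerowicz formula of the paper produces it. Moroianu's compact four-dimensional argument proceeds instead via the hyperhermitian structure induced by the parallel spinor, so this part of the statement remains unproved in your proposal.
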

The following proposition may 
already be found in
\cite[Thm 2.1.]{Agri&F06}, we reproduce a proof for completeness.
\begin{prop}\label{prop.sc.parallel}
If $\psi$ is a $V$-parallel spinor field, the following identities hold:
\begin{enumerate}
\item The Riemannian scalar curvature is given by
\bdm
s^g   = (n-1)(n-2) \|V\|^2 - 2(n-1) \Div V ,\ \text{ i.\,e. }\ s^V\, =\, 0.
\edm
\item The square of the Riemannian Dirac operator acts on $\psi$ by
\bdm
(D^g)^2 \psi = (\frac{n-1}{2} ||V||)^2\psi - \frac{n-1}{2} \Div V \psi  =\frac{1}{4}(s^g + (n-1) \| V \|^2)\psi.
\edm
\end{enumerate}
\end{prop}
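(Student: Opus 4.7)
The proof hinges on three immediate consequences of the assumption $\nabla^1\psi = 0$ (using the notation of (\ref{cov}), so that $\nabla^1$ is the spinorial connection (\ref{con})). First, Clifford-contracting gives $D_1\psi = 0$, and therefore by (\ref{D}) the fundamental identity
$D^g\psi = \tfrac{n-1}{2}V\cdot\psi$
holds. Second, the spinorial Laplacian of $\nabla^1$ annihilates $\psi$, so $\Delta^\nabla_1\psi = 0$. Third, since $(V\wedge V)\cdot\psi = 0$, the torsion correction in $\nabla^g_V = \nabla^1_V - \tfrac{1}{2}(V\wedge V)\cdot$ disappears and one has $\nabla^g_V\psi = 0$.

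For part (1), the plan is to apply the Schr\"odinger--Lichnerowicz-type identity from the previous proposition with $t=1$ to $\psi$. The left-hand side $D_1^*D_1\psi$ vanishes (since $D_1\psi=0$), and $\Delta^\nabla_1\psi = 0$. Substituting $V\cdot D^g\psi = -\tfrac{n-1}{2}\|V\|^2\psi$ (from the fundamental identity and $V\cdot V = -\|V\|^2$) together with $\nabla^g_V\psi = 0$, the identity collapses to an equation of the form $0 = \tfrac{1}{4}s^g\psi + \alpha\|V\|^2\psi + \beta\Div V\,\psi + \gamma\,dV^\flat\cdot\psi$ with explicit constants. Invoking Proposition \ref{prop.mor} to conclude $dV^\flat\cdot\psi = 0$, a short simplification yields $s^g = (n-1)(n-2)\|V\|^2 - 2(n-1)\Div V$; the equivalent statement $s^V = 0$ follows at once from the scalar curvature formula in Corollary \ref{cor.curv-general}.

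For part (2), I would apply $D^g$ directly to the fundamental identity. The Clifford--Leibniz rule $D^g(V\cdot\psi) = (dV^\flat - \Div V)\cdot\psi - V\cdot D^g\psi - 2\nabla^g_V\psi$, combined with $dV^\flat=0$, $\nabla^g_V\psi=0$, and the substitution $V\cdot D^g\psi = -\tfrac{n-1}{2}\|V\|^2\psi$, gives
$(D^g)^2\psi = \tfrac{n-1}{2}D^g(V\cdot\psi) = \bigl(\tfrac{n-1}{2}\|V\|\bigr)^{\!2}\psi - \tfrac{n-1}{2}\Div V\,\psi$,
which is the first expression in (2); the second expression then follows by inserting the formula for $s^g$ from part (1).

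The main technical obstacle is the appearance of the 2-form term $dV^\flat\cdot\psi$ on the right-hand sides of both arguments. Its elimination is crucial and relies on Morimoto's Proposition \ref{prop.mor}; without that input the identities pick up explicit $dV^\flat\cdot\psi$ corrections, and the borderline case $n=4$ on non-compact manifolds would have to be handled separately by spelling out the corresponding modified formulas.
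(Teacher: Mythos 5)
Your computational skeleton coincides with the paper's: from $\nabla\psi=0$ you extract $D^g\psi=\tfrac{n-1}{2}V\cdot\psi$ and $\nabla^g_V\psi=0$, feed these into the Schr\"odinger--Lichnerowicz identity (equivalently into (\ref{L}) with $t=1$) to obtain
\begin{equation*}
s^g\psi\ =\ \bigl[(n-1)(n-2)\|V\|^2-2(n-1)\Div V\bigr]\psi+2(n-2)\,dV^\flat\cdot\psi,
\end{equation*}
and apply the Clifford--Leibniz rule for $D^g(V\cdot\psi)$ to get $(D^g)^2\psi=(\tfrac{n-1}{2}\|V\|)^2\psi-\tfrac{n-1}{2}\Div V\,\psi+\tfrac{n-1}{2}\,dV^\flat\cdot\psi$. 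The gap is in how you dispose of the $dV^\flat\cdot\psi$ terms. You invoke Proposition \ref{prop.mor} to conclude $dV^\flat=0$, but that proposition requires $n\geq 3$ and, for $n=4$, compactness, whereas the statement you are proving is unconditional; the excluded cases are precisely the ones the paper uses later (the torus corollary for $n=2$, and the remark after Theorem \ref{thm.par-spinors} about a non-compact $4$-manifold carrying a $V$-parallel spinor with $dV^\flat\neq 0$). In that last example $dV^\flat$ genuinely does not vanish, so your fallback of handling the borderline case ``by modified formulas'' cannot recover the clean identities claimed: one needs $dV^\flat\cdot\psi=0$ even when $dV^\flat\neq 0$.

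The missing idea is the paper's: Clifford multiplication by a real $2$-form is skew-adjoint for the hermitian product, so $(\alpha\cdot\psi,\psi)$ is purely imaginary for any $2$-form $\alpha$. Pairing the displayed identity with $\psi$ and separating real and imaginary parts yields simultaneously the scalar curvature formula and $(dV^\flat\cdot\psi,\psi)=0$, and substituting back gives $2(n-2)\,dV^\flat\cdot\psi=0$, hence $dV^\flat\cdot\psi=0$ (for $n\neq 2$; for $n=2$ the term is absent from the scalar curvature identity anyway). This is strictly weaker than $dV^\flat=0$ but suffices to finish both parts, and it works in every dimension without compactness. With this argument in place of the appeal to Proposition \ref{prop.mor}, your derivations of (1) and (2) go through verbatim.
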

\begin{proof}
For a $V$-parallel spinor, equation (\ref{con}) implies for $X =V$:
\bdm
\nabla^g_V \psi=0
\edm
and equation (\ref{D}) is reduced to $D^g \psi = \frac{n-1}{2} V\cdot  \psi$.
By applying the Riemannian identity for $D^g(V\cdot\psi)$ (\cite[p.\,19]{BFGK}),
we obtain
\begin{equation}\label{Dquadrat}
(D^g)^2 \psi = (\frac{n-1}{2} ||V||)^2\psi - \frac{n-1}{2} \Div V \psi 
+ \frac{n-1}{2} d(V^b)\psi,
\end{equation}
and from  (\ref{L}) we get
\begin{equation}\label{scaldV}
s^g \psi   =\ \left[(n-1)(n-2) ||V||^2 - 2(n-1) \Div V + 2 (n-2) d (V^b)\right]\psi.
\end{equation}
%
But for any $2$-form $\alpha$, the quantity  $(\alpha \psi, \psi)$ is 
purely imaginary, therefore the last term vanishes. This gives 
$ d(V^b)\psi = 0$---in fact, we already know that $ d(V^b)=0$ from Moroianu's
result except when $n=2,4$. Either way, inserting $ d(V^b)\psi = 0$  
in (\ref{Dquadrat}) und 
(\ref{scaldV}) yields the proposition after a short calculation. 
\end{proof}
%
 For a surface, we conclude that $V$ has to be closed, $s^g =  -  2\,\Div V$  
and Stokes' formula implies:
\begin{cor}
The only closed surface admitting non-trivial $V$-parallel spinor fields 
is the torus.
\end{cor}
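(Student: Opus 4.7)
The plan is to specialize Proposition~\ref{prop.sc.parallel}(1) to $n=2$ and combine it with the Gauss--Bonnet theorem. First, setting $n=2$ in the identity
\[
s^g = (n-1)(n-2)\|V\|^2 - 2(n-1)\Div V
\]
collapses the first term, leaving the pointwise relation $s^g = -2\,\Div V$ on any surface that carries a non-trivial $V$-parallel spinor; equivalently, $s^V = 0$ by Corollary~\ref{cor.curv-surfaces}. This is the key input produced by the spinor hypothesis, and it removes all reference to the spinor bundle in what follows.

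Next, I would integrate this identity over the closed surface $M$. The divergence theorem yields $\int_M \Div V\, d\mu = 0$, and therefore $\int_M s^g\, d\mu = 0$. Since $s^g = 2K$ where $K$ is the Gaussian curvature, Gauss--Bonnet reads $\int_M s^g\, d\mu = 4\pi\chi(M)$, which forces $\chi(M) = 0$.

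Finally, a spin manifold is automatically orientable, so $M$ is an orientable closed surface with vanishing Euler characteristic, hence diffeomorphic to the two-torus $T^2$. Conversely, the flat torus admits Riemannian parallel spinor fields, which are $V$-parallel for the trivial choice $V=0$, so the conclusion is sharp.

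There is no substantive obstacle in this argument; the only care required is the bookkeeping of the $n=2$ specialization of Proposition~\ref{prop.sc.parallel}(1) and the standard normalization $s^g = 2K$ behind Gauss--Bonnet. Note that, in contrast to higher dimensions, one does \emph{not} need to invoke Moroianu's closedness result (Proposition~\ref{prop.mor}) here, since the $\|V\|^2$-term drops out automatically and the conclusion follows purely from integrating the scalar-curvature identity.
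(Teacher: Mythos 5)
Your proposal is correct and follows essentially the same route as the paper: specialize Proposition~\ref{prop.sc.parallel}(1) to $n=2$ to get $s^g=-2\,\Div V$, integrate using Stokes' theorem, and conclude $\chi(M)=0$ via Gauss--Bonnet. The paper's text immediately preceding the corollary is exactly this argument, so there is nothing to add.
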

The following example shows  that on the flat $2$-dimensional torus, 
there exists a vector field $V$ admitting 
$V$-parallel  spinor fields for every spin structure.
\begin{exa}\label{exa.torus}
 On the $2$-dimensional flat torus $T^2= [0,1]/\sim $ there are four spin 
structures, which we label with $( \eps_1, \eps_2)$ with $\eps_ i \in \{0,1\}$. 
The space of continuous spinors can be identified with
\bdm
\{\psi \in C^0 ( [0,1]^2, \C^2)\, \vert \, \psi (x,0) =(-1)^{ \eps _2} \psi (x,1)
\text{ and } \psi (0,y) = (-1)^{ \eps _1} \psi (1,y) \}.
\edm
Therefore, Example  \ref{exa.Rn} implies that  on $T^2$ there are 
$(v_1,v_2)$-parallel spinors  if and only if  $ v_1 = 2(k + \eps_2 )\pi $ and 
$ v_2 = 2(m + \eps_2 )\pi $ where $k,m \in \Z$.
\end{exa}
If there exists a $V$-parallel spinor field on $\R^2 $ then $ V = \grad f$ 
for a harmonic function $f $ anf therefore $V$-parallel spinor fields 
correspond to parallel spinor fields on the corresponding conformal 
equivalent manifold. We give a more concrete example.
\begin{exa}\label{exa.Rn}
On $\R^2$,  every constant vector field $V= (v_1,v_2)$ admits a $2$-dimensional
space of  $V$-parallel spinors, since  the spinor derivative is given by
\bdm
\nabla ^V _{\del_x}\psi\ =\  \del_x \psi + \frac 12 v_2  \omega \cdot \psi,\quad
\nabla^V  _{\del_y}\psi\ =\ \del_y \psi -  \frac 12 v_1  \omega \cdot \psi 
\edm
with 
$\omega = \begin{bmatrix} -i & 0\\ 0 & i \end{bmatrix}$.  
Therefore,  the space of  $V$-parallel spinors on $\R^2 $  is spanned  by 
$e^{ \frac{i}{ 2}(v_2 x-v_1y)} e_1 $ and $ e^{- \frac{i}{ 2}(v_2 x-v_1y)} e_2 $.
\end{exa}
\begin{NB}
 In \cite{Mor} parallel spinors of  Weyl structures  $(M,g, V)$ 
have been investigated, in particular in dimension $4$.
  In this case  the author shows that a compact Weyl $4$-manifold with
a $V$-parallel spinor is a hyperhermitian manifold and thus, by a result of 
Boyer, conformally equivalent to a torus, a K3 surface, or a Hopf surface.
\end{NB}


%
\begin{NB}
Suppose that $\psi$ is a $V$-parallel spinor field, $n>2$.
If $\Div V = 0$ then $M$ has constant positive scalar curvature
\bdm
s^g = (n-1)(n-2) \| V \|^2
\edm
and therefore the expression (2) of the previous Proposition is
reduced to
\bdm
(D^g)^2 \psi \ =\  (\frac{n-1}{2} ||V||)^2\psi\  =\ 
\frac{n-1}{4(n-2)}s^g\psi .
\edm
A $D^g$-eigenspinor is given by $\tilde\psi= \sqrt{\frac{(n-1)s^g}{4(n-2)}}
\,\psi + D^g\psi$.
\end{NB}

%
%
A deeper analysis will now show that as in the case of the Levi-Civita connection, 
the $V$-Ricci curvature of $M$ will vanish if it is symmetric and  if
$M$  admits a nontrivial $V$- parallel spinor.

Recall that the  curvature operator
of any spin connection can be understood as a endomorphism-valued $2$-form,
\bdm
\mathcal{R}(X,Y)\psi \ =\ \nabla_X\nabla_Y \psi - \nabla_Y\nabla_X\psi -
\nabla_{[X,Y]}\psi.
\edm
One checks that it is related to the curvature operator on $2$-forms 
defined through
\bdm
\kr(e_i\wedge e_j)\ :=\ \sum_{k<l} R_{ijkl}  \, e_k\wedge e_l
\edm
by the relation
\bdm
\kr (X,Y)\psi \ =\ \frac{1}{2}\, \mathcal{R}(X\wedge Y)\cdot\psi.
\edm
The following identity is crucial for deriving integrability conditions
for spinor fields satisfying first order differential equations.
It generalizes a well-known
result of Friedrich \cite[Satz 5.2]{Friedrich80} for the 
Levi-Civita connection ($V=0$).
\begin{thm} \label{thm.Ric-id}
%
Let $\nabla$ be a metric spin connection with vectorial torsion. 
Then, the following identity holds for any spinor field $\psi$ and any 
vector field $X$:
\[
\Ric ^V(X) \cdot \psi  = -2\sum_{k=1}^{n} e_k \mathcal{R}^V(X,e_k)\psi 
-  dV^\flat \wedge  X \cdot \psi.
\]
\end{thm}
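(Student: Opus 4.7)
The plan is to adapt Friedrich's classical derivation of the spinor Ricci identity for the Levi-Civita connection \cite[Satz 5.2]{Friedrich80} to the vectorial-torsion setting, using the modified first Bianchi identity (\ref{eq.Bianchi-I}) in place of the standard algebraic Bianchi identity. The only inputs needed are the standard identification of the spin curvature with the Clifford lift of the 2-form $\kr^V(X,Y)$, namely $\mathcal{R}^V(e_i,e_j)\psi=\tfrac{1}{4}\sum_{p,q}R^V_{ijpq}\,e_p\cdot e_q\cdot\psi$ with $R^V_{ijpq}:=g(\kr^V(e_i,e_j)e_p,e_q)$, and the Clifford relations in an orthonormal frame. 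Fixing $X=e_j$, the left-hand side of the claimed identity expands as $\tfrac14\sum_{k,p,q}R^V_{jkpq}\,e_k\cdot e_p\cdot e_q\cdot\psi$, and the strategy is to decompose the triple Clifford product using
\[
e_k\cdot e_p\cdot e_q\;=\;e_k\wedge e_p\wedge e_q\;-\;\delta_{kp}e_q\;+\;\delta_{kq}e_p\;-\;\delta_{pq}e_k
\]
and treat the 3-form part and the contraction parts separately.

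The three contraction terms are easy. After using only the metric-connection antisymmetry of $R^V$ in its last two slots and the definition $\Ric^V(X,Y)=\sum_k R^V(X,e_k,e_k,Y)$, they collapse to $-\tfrac12\,\Ric^V(X)\cdot\psi$. This step requires no Bianchi-type information and is identical to the torsion-free case. The remaining piece is $\tfrac14\sum_{k,p,q}R^V_{jkpq}\,e_k\wedge e_p\wedge e_q\cdot\psi$, a Clifford-multiplied 3-form. Since $e_k\wedge e_p\wedge e_q$ is totally antisymmetric in $(k,p,q)$, only the total antisymmetrization of $R^V_{jkpq}$ in those three indices contributes, which reduces (using again the antisymmetry in the last two slots) to the cyclic sum $R^V_{jkpq}+R^V_{jpqk}+R^V_{jqkp}$. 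This cyclic sum is where (\ref{eq.Bianchi-I}) enters: in the Levi-Civita case it vanishes outright, but in our setting it produces the cyclic combination of $dV^\flat(\cdot,\cdot)\,\delta$-terms, which reorganizes as the single 3-form $-\tfrac12\,dV^\flat\wedge X^\flat$ acting via Clifford multiplication. Multiplying the assembled identity by $-2$ yields the claim.

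The main obstacle lies in the last step: Bianchi as stated in (\ref{eq.Bianchi-I}) cycles the three arguments $(X,e_k,e_p)$ of $\kr^V$ with the fourth argument $e_q$ fixed, whereas the 3-form $e_k\wedge e_p\wedge e_q$ calls for antisymmetrization in the triple $(e_k,e_p,e_q)$. To match these two cyclic structures one must use the antisymmetry of $R^V$ in its first two slots to rewrite each Bianchi term, and then verify coefficient-by-coefficient that the resulting 3-form coincides with $-\tfrac12\,dV^\flat\wedge X^\flat$ by expanding the latter in the orthonormal basis. Once this algebraic identification is carried out, assembling the contraction and the 3-form contributions gives precisely $\Ric^V(X)\cdot\psi=-2\sum_k e_k\cdot\mathcal{R}^V(X,e_k)\psi-dV^\flat\wedge X\cdot\psi$, and the identity is established independently of whether $V$ is closed.
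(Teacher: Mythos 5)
Your proposal is correct and follows essentially the same route as the paper: the splitting of $e_k\cdot e_p\cdot e_q$ into its wedge part and its contraction parts is exactly the paper's decomposition of $\tfrac12\sum_k e_k\cdot\mathcal{R}^V(e_l\wedge e_k)$ into the three-distinct-index piece $R_1$ (handled via the modified first Bianchi identity (\ref{eq.Bianchi-I}) to produce the $dV^\flat\wedge X$ term) and the repeated-index piece $R_2=-\tfrac12\Ric^V(e_l)$. The cyclic-structure matching you single out as the main obstacle is indeed the one nontrivial algebraic step, and the paper passes over it just as briefly, so your account is at the same level of completeness as the original.
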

\begin{proof}
Rewrite the first term on the right hand side (without the numerical factor) as
\bdm
\sum_{k=1}^n e_k \mathcal{R}^V(e_l,e_k)\ =\ \frac{1}{2}\sum_{k=1}^n e_k\cdot
\mathcal{R}^V(e_l\wedge e_k)\ =\ \frac{1}{2}\sum_{k=1}^n \sum_{i<j} R^V_{lkij}
e_k e_i e_j \ =:\ R_1 + R_2,
\edm
where $R_1$ denotes all terms with three different indices $k,i,j$, and $R_2$
all terms with at least one repeated index. We first discuss $R_1$:
\bdm
R_1 \ =\ \frac{1}{2} \sum_{i<j} \left[\sum_{k<i}R^V_{lkij}e_k e_i e_j+
\sum_{i<k<j}R^V_{lkij}e_k e_i e_j+ \sum_{j<k}R^V_{lkij}e_k e_i e_j \right]\ =\
\frac{1}{2} \sum_{k<i<j} \stackrel{k,i,j}{\mathfrak{S}} \, R^V_{lkij} e_k e_i e_j ,
\edm
where the symbol $\mathfrak{S}$ denotes the cyclic sum. The first Bianchi identity
for a metric connection with vectorial torsion (see equation (\ref{eq.Bianchi-I}))
implies then $R_1= - \frac 12 dV^\flat \wedge e_l^\flat $.
The second term does  not depend on the detailed type of the connection, so 
a similar argument as in \cite{Agr/BB/Kim} shows
\bdm
R_2 \ =\ -\frac{1}{2} \sum_{r=1}^n \left[ \sum_{p=1}^{r-1} R^c_{lppr}e_e+
\sum_{q=r+1}^n R^c_{lqqr}e_r\right].
\edm
But since the Ricci tensor is exactly the contraction of the curvature,
$R_2= - \Ric^V(e_l)/2$. This ends the proof.
\end{proof}
For closed vector fields $V$, Theorem \ref{thm.Ric-id} amounts therefore
to  an identity that looks formally as for $\nabla=\nabla^g$. 
\begin{thm}\label{thm.par-spinors}
Let $(M,g)$ be a semi-Riemannian spin manifold, $\nabla$ a connection
with vectorial torsion $V$, $n\geq 2$.
\begin{enumerate}
\item
If $M$ admits a nontrivial $V$-parallel spinor field,
 then $\Ric^V =0$ and $dV^\flat =0$ hold, and in particular,
$M$ is locally conformally Ricci flat. 
If $n=4$, $\Ric^V$ is totally
skew symmetric and given by    $\Ric^V(X) =X\haken dV^\flat$.
\item If $dV^\flat =0$, there are no nontrivial $\nabla$-Killing
spinor fields, i.\,e.~spinor fields satisfying 
the equation  $\nabla_X\psi=\beta\, X\cdot\psi$ for some $\beta \neq 0$.
\end{enumerate}
\end{thm}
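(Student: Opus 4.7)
My plan is to apply the Ricci identity of Theorem \ref{thm.Ric-id} to each of the two spinor equations separately. For Part (1), the starting point is that $V$-parallelism of $\psi$ gives $\mathcal{R}^V(X,Y)\psi=0$ identically, so Theorem \ref{thm.Ric-id} collapses to the fundamental identity
\begin{equation*}
\Ric^V(X)\cdot\psi\ =\ -\,(X\wedge dV^\flat)\cdot\psi.
\end{equation*}
For $n=3$ and $n\geq 5$ I would then invoke Moroianu's Proposition \ref{prop.mor} to get $dV^\flat=0$, which makes the right-hand side vanish; since $Y\cdot Y=-\|Y\|^2$ forces Clifford multiplication by a nonzero vector to be injective on a nonzero spinor, this yields $\Ric^V(X)=0$ for every $X$. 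Local conformal Ricci-flatness is then immediate: closedness of $V^\flat$ gives $V=-\grad f$ locally, and Proposition \ref{prop.conf. Aenderung} identifies $\Ric^V$ with $\Ric^{\tilde g}$ for $\tilde g=e^{2f}g$, so $\tilde g$ is Ricci flat.

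The delicate case is $n=4$, where Moroianu's statement needs compactness. Here I would exploit the chirality decomposition $\psi=\psi^++\psi^-$, which is preserved by $\nabla^V$ since the correction $\tfrac12 X\wedge V$ is an even Clifford element. The volume form $\omega_4$ anticommutes with odd Clifford elements and satisfies $\omega_4^2=1$ in dimension four, and any $3$-form $\alpha$ can be written as $\alpha=(*\alpha)\cdot\omega_4$; therefore $(X\wedge dV^\flat)\cdot\psi^{\pm}=\pm(X\haken *dV^\flat)\cdot\psi^{\pm}$. On a nonzero chirality component, Clifford injectivity of vectors gives $\Ric^V(X)=\mp X\haken *dV^\flat$. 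Contracting the fundamental identity by $\sum_l e_l\cdot(\,\cdot\,)$ and using the computations $\sum_l e_l\cdot e_l\cdot\Ric^V(e_l)=-s^V+2\alpha^V$ (where $\alpha^V$ is the antisymmetric part of $\Ric^V$ as a $2$-form) together with $\sum_l e_l\cdot(e_l\wedge dV^\flat)=-(n-2)\,dV^\flat$, and combining with $s^V=0$ from Proposition \ref{prop.sc.parallel}, yields $2\alpha^V\cdot\psi=(n-2)\,dV^\flat\cdot\psi$. In dimension four this forces the (anti-)self-duality of $dV^\flat$ aligned with the chirality of $\psi$, so that $*dV^\flat=\mp dV^\flat$ exactly compensates the sign above, giving $\Ric^V(X)=X\haken dV^\flat$; totally skew-symmetry is then automatic since $dV^\flat$ is a $2$-form.

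For Part (2), the strategy is the classical Killing spinor integrability argument adapted to vectorial torsion. Differentiating $\nabla^V_X\psi=\beta X\cdot\psi$ once more, using compatibility of the connection with Clifford multiplication and the torsion relation $\nabla^V_XY-\nabla^V_YX-[X,Y]=T^V(X,Y)=g(V,X)Y-g(V,Y)X$ together with the identity $Y\cdot X-X\cdot Y=-2X\wedge Y$, yields
\begin{equation*}
\mathcal{R}^V(X,Y)\psi\ =\ \beta\,T^V(X,Y)\cdot\psi\ -\ 2\beta^2(X\wedge Y)\cdot\psi.
\end{equation*}
Substituting into Theorem \ref{thm.Ric-id} with $dV^\flat=0$ and computing the sums $\sum_k e_k\cdot T^V(X,e_k)=-n\,g(V,X)-V\cdot X$ and $\sum_k e_k\cdot(X\wedge e_k)=(n-1)X$ in the Clifford algebra gives
\begin{equation*}
\Ric^V(X)\cdot\psi\ =\ 2\beta n\, g(V,X)\psi\ +\ 2\beta\,V\cdot X\cdot\psi\ +\ 4\beta^2(n-1)\,X\cdot\psi .
\end{equation*}
The trick is to pair with $\psi$ Hermitianly and keep only the real part: Clifford multiplication by $1$-forms and $2$-forms is skew-adjoint on $\Sigma M$, so after decomposing $V\cdot X=V\wedge X-g(V,X)$ only the scalar term survives, producing $0=2\beta(n-1)g(V,X)\|\psi\|^2$. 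Since $\beta\neq 0$, $\psi\neq 0$ and $n\geq 2$, this forces $V\equiv 0$, contradicting the assumption of nontrivial vectorial torsion.

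The hardest step will clearly be the $n=4$ part of (1): everywhere else the argument is a fairly straightforward combination of Theorem \ref{thm.Ric-id} with the existing Proposition \ref{prop.mor}, Proposition \ref{prop.conf. Aenderung}, and Clifford algebra identities, but in dimension four Moroianu's theorem is unavailable without compactness, and the right formulation of $\Ric^V$ only emerges after carefully reconciling the chirality-dependent Hodge-duality identity $\alpha\cdot\psi^\pm=\pm(*\alpha)\cdot\psi^\pm$ with the traced form of the fundamental identity.
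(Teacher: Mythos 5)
Your Part (2) is correct and essentially identical to the paper's argument: the same integrability condition $\Ric^V(X)\cdot\psi = 2\beta n\, g(V,X)\psi + 2\beta\, V\cdot X\cdot\psi + 4\beta^2(n-1)X\cdot\psi - (X\wedge dV^\flat)\cdot\psi$ is derived from Theorem \ref{thm.Ric-id}, and the real part of the Hermitian pairing with $\psi$ kills everything except the scalar terms, forcing $g(V,X)\equiv 0$. For Part (1), however, your route diverges from the paper's and contains a genuine gap exactly where you anticipated it. A smaller issue first: outsourcing $dV^\flat=0$ to Proposition \ref{prop.mor} for $n\neq 4$ leaves the case $n=2$ uncovered (that proposition requires $n\geq 3$), whereas the theorem is stated for $n\geq 2$.

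The concrete gap is in your $n=4$ argument. The traced identity you derive, $2\alpha^V\cdot\psi=(n-2)\,dV^\flat\cdot\psi$, is vacuous: by Corollary \ref{cor.curv-general} the antisymmetric part of $\Ric^V$ is identically $\alpha^V=\tfrac{n-2}{2}dV^\flat$, so both sides are literally the same expression and no self-duality statement about $dV^\flat$ can be extracted from it. The fact you actually need is $dV^\flat\cdot\psi=0$, which comes out of the \emph{proof} of Proposition \ref{prop.sc.parallel}: there $2(n-2)\,dV^\flat\cdot\psi$ is shown to equal a real function times $\psi$, while $(dV^\flat\cdot\psi,\psi)$ is purely imaginary, whence $dV^\flat\cdot\psi=0$. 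Once that is in hand, the chirality and Hodge-duality detour is unnecessary: $X\wedge\omega=X\cdot\omega+X\haken\omega$ gives $(X\wedge dV^\flat)\cdot\psi=(X\haken dV^\flat)\cdot\psi$, hence $\Ric^V(X)=X\haken dV^\flat$ in \emph{every} dimension $n\geq 2$. This is the paper's route, and it finishes uniformly and without Moroianu: comparing with the a priori splitting $\Ric^V(X)=\Ric^V_{\mathrm{sym}}(X)+\tfrac{n-2}{2}X\haken dV^\flat$ yields $\Ric^V_{\mathrm{sym}}(X)=\tfrac{4-n}{2}\,X\haken dV^\flat$; a symmetric tensor equal to a skew one must vanish, so $dV^\flat=0$ and $\Ric^V=0$ for $n\neq 4$, while for $n=4$ only the skew part $X\haken dV^\flat$ survives. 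Your self-dual/anti-self-dual picture is correct a posteriori (it is how Moroianu's four-dimensional non-compact examples arise), but as written the step ``this forces the (anti-)self-duality of $dV^\flat$'' does not follow from anything you have established.
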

\begin{proof}
For both claims, let's suppose $\psi$ is a spinor satisfying 
the equation  $\nabla_X\psi=\beta\, X\cdot\psi$ for all vector fields $X$
($\nabla$-parallel spinor fields will be obtained by choosing $\beta=0$).
Such a spinor field has automatically constant length, because it is
parallel for the metric spinorial connection 
$\tilde{\nabla}_X:=\nabla_X-\beta\, X \cdot$. Then
\begin{eqnarray*}
\kr(X,Y)\psi & =& \nabla_X\nabla_Y \psi - \nabla_Y\nabla_X\psi -
\nabla_{[X,Y]}\psi\\ 
& = & \nabla_X(\beta\, Y\cdot\psi) - \nabla_Y(\beta\, X\cdot\psi) -\beta
[X,Y]\cdot\psi\\
& = &\beta(\nabla_X Y -\nabla_Y X - [X,Y])\cdot\psi + \beta(Y\cdot \nabla_X \psi - X\cdot \nabla _Y \psi)\\
&=& \beta( g(V,X)Y - g(V,Y)X)\cdot\psi +\beta^2 (Y\cdot X - X\cdot Y)\cdot \psi\\
&=& \beta( g(V,X)Y - g(V,Y)X)\cdot\psi +2\, \beta^2 (Y\cdot X + g(X,Y)) \cdot\psi. 
\end{eqnarray*}
Therefore, the curvature contraction may be computed,
\bdm
\sum_{k=1}^n e_k\kr^V(X,e_k)\ =\ -\beta (n g(V,X)+V\cdot X)\cdot\psi
+ 2\beta^2(1-n)X\cdot\psi.
\edm
By Theorem \ref{thm.Ric-id}, we can conclude that the following equation is an
integrability condition for the existence of such a spinor field,
\be\label{eq.int-cond-KST}
\Ric^V(X)\cdot\psi\ =\ 2 \beta n\, g(V,X)\,\psi + 2\beta\, V\cdot X\cdot\psi
+ 4\beta^2(n-1)X\cdot\psi - (dV^\flat\wedge X)\cdot\psi.
\ee
We now discuss the two situations occuring in the statement. Let's treat the 
easier case first, i.\,e.~$\beta\neq 0$ and $dV^\flat=0$.
The last term of the integrability condition hence  vanishes. 
Let $(-,-)$ be the positive definite scalar product on spinor fields induced 
by the canonical hermitian product of the spinor  bundle, and recall
that it satisfies $(X\cdot\vphi,\vphi)=0$  for any spinor field $\vphi$.
We take the scalar product of the remaining identity (\ref{eq.int-cond-KST})
with $\psi$ and, by the previous remark, we are finally left with 
\bdm
0\ =\ \beta n\, g(V,X)\|\psi\|^2 + \beta\,(V\cdot X\cdot \psi,\psi)+0.
\edm
But $(V\cdot X\cdot \psi,\psi) = - (X\cdot \psi,V\cdot\psi)= -g(X,V)\|\psi\|^2 $,
so this identity cannot hold for $\beta\neq 0$ if $n\neq 1$.

Now let's consider the case of a  $V$-parallel spinor field, i.\,e.~$\beta=0$.
The integrability condition (\ref{eq.int-cond-KST}) is thus reduced to
\bdm
\Ric^V(X)\cdot\psi\ =\ - (dV^\flat\wedge X)\cdot\psi.
\edm
Inner and exterior product are related by 
$X\cdot \omega = X\wedge\omega- X\haken \omega$.
From Prosition \ref{prop.sc.parallel}, we know that $dV^\flat\cdot\psi=0$,
hence $X\cdot dV^\flat\cdot\psi=0 $ and thus 
\bdm
- (dV^\flat\wedge X)\cdot\psi\ =\ + (X\wedge dV^\flat )\cdot\psi\ =\ 
(X\haken dV^\flat)\cdot\psi.
\edm
Hence, the integrability condition is reduced to
$\Ric^V(X)\cdot\psi = (X\haken dV^\flat)\cdot\psi$. Since $\psi$ has constant length,
this implies 
\bdm
\Ric^V(X) \ =\ X\haken dV^\flat.
\edm
Viewed as an endomorphism, $X\haken dV^\flat$ is antisymmetric,
whereas $\Ric^V$ may be split into its symmetric and antisymmetric part according 
to Corollary \ref{cor.curv-general},
\bdm
\Ric^V(X) \ =\ \Ric^V_{\text{sym}}(X) + \frac{n-2}{2} X\haken dV^\flat.
\edm
We conclude $\Ric^V_{\text{sym}}(X)= \frac{4-n}{2} \,X\haken dV^\flat$.
One is symmetric, one is antisymmetric, so both have to vanish. If $n\neq 4$,
this implies $dV^\flat=0$ and then also $\Ric^V(X)=0$.
For $n=4$, we can only conclude $\Ric^V_{\text{sym}}(X)=0$, hence
 $\Ric^V$ has only antisymmetric part given by $\Ric^V(X) = X\haken dV^\flat$.
\end{proof}
\begin{NB}
The dimension distinction in statement (1) of the Corollary cannot be removed.
Indeed, in Section 7 of \cite{Mor}, an example of a non closed vector field $V$
admitting non-trivial $V$-parallel spinors is given
on an open subset of $\C^2$. Thus, $\Ric^V(X) = X\haken dV^\flat\neq 0$ and
statement (1) cannot be improved to $\Ric^V(X)=0$ for $n=4$ without 
the assumption of compactness (compare Proposition \ref{prop.mor}).
However, our result proves that $dV^\flat=0$ if and only if $\Ric^V=0$
in the non-compact case, and gives a formula expressing one quantity 
through the other. Oddly enough, we recover that $s^V=0$ even in dimension
$4$ (as it should be by Proposition \ref{prop.sc.parallel}), since
$\Ric^V$, although possibly non-zero, is always skew-symmetric and therefore
trace-free.
\end{NB}
Recall that  a  closed manifold of dimension  $n>2$ with vector field $V$ 
carries a conformally equivalent  metric  for which   the corresponding 
vector field has vanishing divergence.   Applying Proposition 
\ref{prop.closed ricci flat} thus yields
\begin{cor} 
Let $M$ be a closed  Riemannian  manifold of dimension $n>2$ with a 
$V$-parallel spinor. Then $M$  is  conformally equivalent either  
to a manifold with parallel spinor or to a manifold  whose universal 
cover is the product of $\R $ and an Einstein space of positive scalar 
curvature.
\end{cor}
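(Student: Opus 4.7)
The plan is to reduce the problem to Proposition \ref{prop.closed ricci flat} via Gauduchon's standard metric, and then split on whether the harmonic representative vanishes. First I would invoke Proposition \ref{prop.mor}, which applies in all dimensions $n\geq 3$ since $M$ is compact, to conclude that $V$ is closed. Then I would choose a Gauduchon standard metric $\tilde g = e^{2f}g$ in the conformal class of $g$ so that $\tilde V := V-df$ satisfies $\Div^{\tilde g}\tilde V = 0$; closure of $V$ together with exactness of $df$ yields $d\tilde V^\flat = 0$, so $\tilde V$ is $\tilde g$-harmonic.

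The next step is to transport the $V$-parallel spinor on $(M,g)$ to a $\tilde V$-parallel spinor on $(M,\tilde g)$. Because $dV^\flat = 0$, the connection with vectorial torsion coincides with the Weyl connection of the triple $(M,g,V)$ (as noted in the remark after Proposition \ref{Weyl-tensor}), and the latter is conformally invariant under the replacement $(g,V)\mapsto(\tilde g,\tilde V)$; the lift to the spinor bundle then behaves under the usual conformal rescaling of spinors. This is the one computational point of the proof, since the remark after Proposition \ref{prop.conf. Aenderung} spells out the spin-level identification only for $V=-\grad f$, and here one has to verify it for a general closed $V$ directly from formula~(\ref{con}).

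With this identification in hand, branch on $\tilde V$. If $\tilde V = 0$, then $V = df$ is exact and $V = -\grad(-f)$, so Proposition \ref{prop.conf. Aenderung} together with the conformal spinor identification produces a Riemannian parallel spinor on $(M,e^{-2f}g)$, which is the first alternative. If instead $\tilde V\neq 0$, then $(M,\tilde g)$ carries a nontrivial $\tilde V$-parallel spinor with $\tilde V$ harmonic and $d\tilde V^\flat = 0$. Theorem \ref{thm.par-spinors} then forces $\Ric^{\tilde V} = 0$---the potential dimension-four obstruction $X\haken d\tilde V^\flat$ vanishes because $\tilde V$ is closed---and Proposition \ref{prop.closed ricci flat} applies, showing that $\tilde V$ is $\nabla^{\tilde g}$-parallel and that the universal cover of $(M,\tilde g)$ is $\R$ times an Einstein space of positive scalar curvature. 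This gives the second alternative.

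The main obstacle, as indicated, is the middle step: once the conformal correspondence $V\mapsto V-df$ is established at the level of the spin connection with vectorial torsion, the remainder is a clean dichotomy between the exact and the nontrivial-harmonic parts of the Hodge decomposition of the closed $1$-form $V^\flat$, settled respectively by Proposition \ref{prop.conf. Aenderung} and Proposition \ref{prop.closed ricci flat}.
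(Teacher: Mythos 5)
Your proof is correct and follows essentially the same route as the paper, which derives the corollary in one line by passing to the Gauduchon standard metric with divergence-free $\tilde V = V - df$ and applying Proposition \ref{prop.closed ricci flat} (with Theorem \ref{thm.par-spinors} supplying $\Ric^{\tilde V}=0$ and closedness, and the dichotomy resting on whether $\tilde V$ vanishes). You are in fact more explicit than the paper on the two points it leaves implicit --- using Proposition \ref{prop.mor} to secure $dV^\flat=0$ in the compact four-dimensional case, and the conformal transfer of the $V$-parallel spinor to a $\tilde V$-parallel spinor for a general closed $V$, which the paper justifies only via the Weyl-structure interpretation.
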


%


%

    
\end{document}